\newtheorem{thm}{Theorem}[section]
\newtheorem{cor}[thm]{Corollary}
\newtheorem{lem}[thm]{Lemma}
\theoremstyle{definition}
\newtheorem{defi}[thm]{Definition}
\theoremstyle{remark}
\numberwithin{equation}{section}
\numberwithin{figure}{section}
\def\A{\mathcal{A}}
\def \ex{\mbox{\rm ex}}
\def \E{\tilde{E}}
\def\lin{\text{lin}}
\def \spex{\mbox{\rm spex}}
\def \T{\text{T}}
\begin{document}
\title[Spectral Tur\'an Problems]{Linear spectral Tur\'an problems for expansions of graphs with given chromatic number}

\author[C.-M. She]{Chuan-Ming She}
\address{School of Mathematical Sciences, Anhui University, Hefei 230601, P. R. China}
\email{cm-she@stu.ahu.edu.cn}

\author[Y.-Z. Fan]{Yi-Zheng Fan*}
\address{Center for Pure Mathematics, School of Mathematical Sciences, Anhui University, Hefei 230601, P. R. China}
\email{fanyz@ahu.edu.cn}
\thanks{*The corresponding author.
Supported by National Natural Science Foundation of China (Grant No. 11871073).
}

\author[L. Kang]{Liying Kang$^\dag$}
\address{Department of Mathematics, Shanghai University, Shanghai 200444, P. R.  China}
\email{lykang@shu.edu.cn}
\thanks{$^\dag$Supported by National Natural Science Foundation of China (Grant No. 11871329).}

\author[Y. Hou]{Yaoping Hou$^\ddag$}
\address{College of Mathematics and Statistics, Hunan Normal University,
Changsha 410081, P. R. China}
\email{yphou@hunnu.edu.cn}
\thanks{$^\ddag$Supported by National Natural Science Foundation of China (Grant No. 11971164).}

\subjclass[2000]{05C35, 05C65}

\keywords{Linear hypergraph; extreme problem; adjacency tensor; spectral radius; expansion of graph}

\begin{abstract}
An $r$-uniform hypergraph is linear if every two edges intersect in at most one vertex.
The $r$-expansion $F^{r}$ of a graph $F$ is the $r$-uniform hypergraph obtained from $F$ by
enlarging each edge of $F$ with a vertex subset of size $r-2$ disjoint from the vertex set of $F$ such that
distinct edges are enlarged by disjoint subsets.
Let $\ex_{r}^{\lin}(n,F^{r})$ and $\spex_{r}^{\lin}(n,F^{r})$ be the maximum number of edges and the maximum spectral radius of all $F^{r}$-free linear $r $-uniform hypergraphs with $n$ vertices, respectively.
In this paper, we present the sharp (or asymptotic) bounds of $\ex_{r}^{\lin}( n,F^{r})$ and $\spex_{r}^{\lin}(n,F^{r})$ by establishing the connection between the spectral radii of linear hypergraphs and those of their shadow graphs, where $F$ is a $(k+1)$-color critical graph or a graph with chromatic number $k$.
\end{abstract}

\maketitle

\section{Introduction}
A \emph{hypergraph} $H=(V(H),E(H))$ consists of a vertex set $V(H)$ and an edge set $E(H)$, where each edge $e \in E(H)$ is a subset of $V(H)$.
If each edge $e$ of $H$ is an $r$-element subset  of $V(H)$, then
$H$ is called \emph{$r$-uniform}.
A hypergraph is \emph{linear} if any two edges intersect at most one vertex.
So, a simple graph is a linear $2$-uniform hypergraph.

Given family $\mathcal{F}$ of hypergraphs, we say a hypergraph $G$ is \emph{$\mathcal{F}$-free} if it does not have a (not necessarily induced) subgraph isomorphic to any graph $F\in \mathcal{F}$.
Let $\ex_{r}(n,\mathcal{F})$ and $\spex_{r}(n,\mathcal{F})$ denote the maximum number of edges and the maximum spectral radius of all $\mathcal{F}$-free $r$-uniform hypergraphs on $n$ vertices, respectively.
For brevity, we write $\ex_{r}(n,F)$ and $\spex_{r}(n,F)$ instead of $\ex_{r}(n,\{F\})$ and $\spex_{r}(n,\{F\})$ when $\mathcal{F}=\{F\}$.
When considering only simple graphs, we abbreviate them as $\ex(n,F)$ and $\spex(n,F)$ respectively.

The number 	$\ex_{r}(n,\mathcal{F})$ is called \emph{Tur\'an number}, and the corresponding Tur\'an problem of determining the Tur\'an number of graphs or hypergraphs is one of the most fundamental problems in extremal combinatorics \cite{furedi2013history}.
The study of Tur\'an Problem can be dated back at
	least to 1907~\cite{mantel1907problem}, when Mantel proposed the Mantel Theorem that $e(G)\le \left \lfloor n^{2}/4  \right \rfloor $ for every triangle-free $G$ on $n$ vertices, where $e(G)$ denotes the number of edges of $G$.
Tur\'an~\cite{turaan1941extremal} determined $\ex(n,K_{t+1})$, and proved the well-known Tur\'an theorem: the Turán graph, namely the complete $t$-partite graph on $n$ vertices with the part sizes as equal as possible, is the unique $K_{t+1}$-free graph with the maximum number of edges, where $K_{t+1}$ is a complete graph on $t+1$ vertices.

A graph $G$ is \emph{$k$-colorable} if there exists a mapping $\phi: V(G) \to \{ 1,2,\cdots,k\}$ satisfying $\phi (u)\ne \phi(v)$ for any two adjacent vertices $u$ and $v$.
The \emph{chromatic number} of $G$, denoted by $\chi(G)$, is defined as the minimum number $k$ such that $G$ is $k$-colorable.
A graph $G$ is \emph{$(k+1$)-color critical} if $\chi(G)=k+1$, and there exists an edge $e$ of $G$ such that $\chi(G-e)=k$, where $G-e$ denotes the graph obtained from $G$ by deleting the edge $e$.
By the stability theorem, Simonovits \cite{simonovits1968method} determined $\ex(n,F)$ when $F $ is a color critical graph.

 The spectral Tur\'an problem is a spectral version of Tur\'an problem, namely, determining $\spex_r(n,\mathcal{F})$ or the maximum spectral radius of an $\mathcal{F}$-free graph or hypergraph on $n$ vertices.
  Brualdi and Solheid~\cite{brualdi1986spectral} proposed the problem of maximizing the spectral radius over a class of graphs with prescribed structural property in 1986.
 Nikiforov ~\cite{nikiforov2007bounds} presented a spectral version of Turán theorem in 2007, namely, the Tur\'an graph is also the unique $K_{t+1}$-free graph with the maximum spectral radius.
    Nikiforov \cite{nikiforov2009spectral} also obtained a spectral Erd\"os-Stone-Bollob\'as theorem, which implies the Erd\"os-Stone-Bollob\'as and Erd\"os-Stone-Simonovits theorem.

The propose of this paper is to consider the spectral Turán problems in hypergraphs.
Let $F$ be a graph.
The \emph{$r$-expansion} of a graph $F$ is the $r$-uniform hypergraph $F^{r}$ which is obtained from $F$ by inserting $r-2$ additional vertices to each edge of $F$.
Formally, for each edge $e \in E(F)$, we associate it with an $(r-2)$-set $S_e$ with the property:
$S_e \cap V(F)=\emptyset$ for any $e \in E(F)$, and $S_e \cap S_f=\emptyset$ for any two edges $e,f \in E(F)$.
The vertex set and edge set of $F^r$ is defined as follows:
$$ V(F^r)=V(F) \cup (\cup_{e \in E(F)} S_e),~ E(F^r)=\{ e \cup S_e: e \in E(F)\}.$$
 Obviously, $F^{r}$ is a linear $r$-uniform hypergraph which has exactly $|E(F)|$ edges and $|V(F)|+(r-2)|E(F)|$ vertices.

 In 2006, Mubayi~\cite{Mubayi2006A} determined the asymptotic value of $ \ex_{r}(n,K_{t+1}^{r})/\binom{n}{r}$.
 Combining the above result and the supersaturation technique of Erd\'os~\cite{erdos1964extremal}, we will give the asymptotic value of $\ex_{r}(n,F^{r})$ for all $(t+1)$-color critical graph $F$ with $t\ge r$.
 In 2007, Mubayi and Pikhurko~\cite{mubayi2007new} determined
 $\ex_{r}(n,\text{Fan}^{r})$ for an $r$-uniform hypergraph $\text{Fan}^{r}$ which is a generalization of the triangle (as a simple graph).

 When focused on linear hypergraphs, we denote $\ex^{\lin}_{r}(n,\mathcal{F})$ and $\spex^{\lin}_{r}(n,\mathcal{F})$ the maximum number of edges and the maximum spectral radius of all linear $\mathcal{F}$-free $r$-uniform hypergraphs on $n$ vertices, respectively.
 In 2020, F\"uredi and Gy\'arf\'as~\cite{furedi2020extension} considered the problem of determing $\ex_{r}^{\lin}(n,\text{Fan}^{r})$.
  In 2021, Gao and Chang~\cite{gao2021linear} gave an upper bound for $\ex_{r}^{\lin}(n,K_{s,t}^{r})$, where $K_{s,t}$ denotes a complete bipartite graph with two parts of size $s,t$ respectively.
  For more investigations of the hypergraph Tur\'an problem, we recommend readers the survey of Mubayi~\cite{mubayi2016survey}.
	
	The spectral Tur\'an problem of hypergraphs has attracted a great deal of attention recently.
In 2014, Keevash, Lenz and Mubayi~\cite{keevash2014spectral} gave two
	general criteria under which spectral extremal results may be deduced from ‘strong stability’ forms
	of the corresponding extremal results.
In \cite{hou2021spectral}, Hou, Chang and Cooper gave the maximum spectral radius of linear $r$-uniform hypergraphs with forbidden $\text{Fan}^{r}$ under some conditions, and an upper bound for the spectral radius of linear $r $-uniform hypergraphs without Berge $C_4$.
Gao, Chang and Hou~\cite{gao2022spectral} determined the value $\spex_{r}^{\lin}(n,K_{r+1}^{r})$ and gave the extremal graph attained at the maximum spectral radius, which is a transversal design $T(n,r)$.
	
In this paper, we mainly consider the linear spectral Tur\'an problems for the expansion of $(k+1)$-color critical graph $F$.
As $K_{r+1}$ is a $(r+1)$-color critical graph, our results will generalize the related results in \cite{gao2022spectral}.
The main results in this paper are as follows.
	
\begin{thm}\label{sp-crit}
	Let $F$ be a $(k+1) $-color critical graph and $ F^{r}$ be the $r $-expansion of $F$, where $k \ge r $.
Then, for sufficiently large $n$,
\begin{equation} \label{spcri}\spex_{r}^{\lin}(n,F^{r} ) \le\frac{n(k-1)}{k(r-1)}. \end{equation}
If $n=km$ and $m,k,r$ satisfy (\ref{Cond2}) in Theorem \ref{T-Cond2}, the inequality in (\ref{spcri}) can hold as equality for sufficiently large $m$.
\end{thm}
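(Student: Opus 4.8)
The plan is to reduce the linear hypergraph problem to a genuine graph spectral Tur\'an problem on the shadow (2-section) graph $\partial H$, whose edges are the pairs covered by some hyperedge of $H$. First I would let $H$ be an $F^{r}$-free linear $r$-uniform hypergraph on $n$ vertices attaining $\rho(H)=\spex_{r}^{\lin}(n,F^{r})$; we may assume $H$ is connected, since the spectral radius is attained on a component, so by Perron--Frobenius for the adjacency tensor there is a positive eigenvector $x$ with $\sum_{v}x_{v}^{r}=1$ and $\rho(H)=r\sum_{e}\prod_{v\in e}x_{v}$.

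The central tool is the connection lemma $(r-1)\rho(H)\le\lambda(\partial H)$, where $\lambda$ is the ordinary adjacency spectral radius. I would prove it with the test vector $y_{v}=x_{v}^{r/2}$, which has $\|y\|_{2}=1$. Because $H$ is linear, distinct hyperedges contribute disjoint pairs to $\partial H$, so $y^{\top}A(\partial H)y=2\sum_{e}\sum_{\{u,v\}\subseteq e}(x_{u}x_{v})^{r/2}$. For each edge, AM--GM over its $\binom{r}{2}$ pairs, together with $\prod_{\{u,v\}\subseteq e}x_{u}x_{v}=(\prod_{v\in e}x_{v})^{r-1}$, gives $\frac{1}{\binom{r}{2}}\sum_{\{u,v\}\subseteq e}(x_{u}x_{v})^{r/2}\ge\prod_{v\in e}x_{v}$. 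Summing over $e$ yields $\lambda(\partial H)\ge y^{\top}A(\partial H)y\ge r(r-1)\sum_{e}\prod_{v\in e}x_{v}=(r-1)\rho(H)$, with equality exactly when $x$ is constant on each edge.

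The next step is to transfer the forbidden-subgraph condition: I would show that $F^{r}$-freeness forces $\partial H$ to be $F$-free. A copy of $F$ in $\partial H$ on a core set $W$ assigns to each edge $\{u,v\}\in E(F)$ the unique (by linearity) hyperedge $e_{uv}\supseteq\{u,v\}$, and this lifts to a copy of $F^{r}$ in $H$ precisely when the $(r-2)$-sets $e_{uv}\setminus\{u,v\}$ are pairwise disjoint and disjoint from $W$. The hard part will be exactly here: a particular copy of $F$ may fail to lift because its auxiliary sets collide. I would control this for large $n$ by supersaturation, using that $k\ge r$ guarantees the expansion is well behaved: if $\partial H$ were not $F$-free while having $\lambda(\partial H)$ near $\frac{(k-1)n}{k}$, it would contain $\Omega(n^{|V(F)|})$ copies of $F$, whereas only $O(n^{|V(F)|-1})$ of them can have a collision (each collision forces an equality among auxiliary vertices), so some copy lifts cleanly, contradicting $F^{r}$-freeness. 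Sharpening this count enough to obtain the exact bound rather than an $o(n)$ error is the technical heart of the proof.

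Once $\partial H$ is $F$-free, the sharp spectral Tur\'an theorem for $(k+1)$-color-critical graphs gives $\lambda(\partial H)\le\lambda(T_{k}(n))\le\frac{(k-1)n}{k}$ for large $n$, and combining with the connection lemma yields $\rho(H)\le\frac{(k-1)n}{k(r-1)}$, which is (\ref{spcri}). For the matching lower bound when $n=km$, I would take the regular linear design $H$ produced under the conditions of Theorem~\ref{T-Cond2} (here $k\ge r$ is needed so that the edges are partial transversals of $r$ of the $k$ parts): its shadow is the complete $k$-partite graph $T_{k}(n)$, so $\chi(\partial H)=k<\chi(F)$ forces $H$ to be $F^{r}$-free, while regularity makes $x$ constant, giving equality in the connection lemma and hence $\rho(H)=\frac{\lambda(T_{k}(n))}{r-1}=\frac{(k-1)n}{k(r-1)}$.
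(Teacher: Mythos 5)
Your overall architecture coincides with the paper's: the same shadow-graph connection lemma $(r-1)\rho(H)\le\rho(\partial H)$, proved with the same test vector $x^{[r/2]}$ and the same AM--GM step over the $\binom{r}{2}$ pairs of each hyperedge, a reduction to a spectral Tur\'an statement on $\partial H$, and the group divisible design of type $m^{k}$ for the matching lower bound (your chromatic-number argument for the $F^{r}$-freeness of that design is correct). The divergence, and the gap, lies in how you transfer $F^{r}$-freeness from $H$ to $\partial H$.

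The intermediate claim that ``$F^{r}$-freeness forces $\partial H$ to be $F$-free'' is false, and the supersaturation patch you propose does not repair it. First, the hypothesis ``$\lambda(\partial H)$ near $\frac{(k-1)n}{k}$ and $\partial H$ contains a copy of $F$'' does not yield $\Omega(n^{|V(F)|})$ copies of $F$: the graph $\T_{k}(n)$ with a single edge added inside one class has spectral radius essentially $\rho(\T_{k}(n))$ and contains $F$ (since $F$ is $(k+1)$-color critical), yet every copy of $F$ in it must use the added edge, so there are only $O(n^{|V(F)|-2})$ copies. Second, a lower bound on the spectral radius gives no lower bound on $e(\partial H)$ (a graph can have large spectral radius and very few edges), so classical Erd\"{o}s supersaturation is not available from your hypothesis either; the comparison $\Omega(n^{|V(F)|})$ versus $O(n^{|V(F)|-1})$ therefore never gets off the ground. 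What actually works --- and is what the paper does --- is to argue in contrapositive without ever asserting $F$-freeness of $\partial H$: if $\rho(H)>\frac{n(k-1)}{k(r-1)}$, then $\rho(\partial H)>\frac{n(k-1)}{k}\ge\rho(\T_{k}(n))$, so Nikiforov's saturation theorem produces a copy of $K_{k}^{+}(\lfloor c\ln n\rfloor,\ldots,\lfloor c\ln n\rfloor)$ in $\partial H$; one then builds a clean copy of $(K_{k}^{+}(l,\ldots,l))^{r}$ inside $H$ by choosing the $kl$ core vertices one at a time from the parts of this blow-up, at each step excluding the boundedly many (in terms of $k,l,r$, independent of $n$) vertices whose supporting hyperedges would collide with those already used. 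The collision-avoidance count must be carried out inside the logarithmic-sized blow-up, where each part has $\lfloor c\ln n\rfloor\to\infty$ vertices and only $O_{k,l,r}(1)$ are forbidden at each step; your global count over all of $\partial H$ does not substitute for this. Since $K_{k}^{+}(l,\ldots,l)$ contains $F$ for suitable $l$, this yields $F^{r}\subseteq H$, the desired contradiction.
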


\begin{thm}\label{sp-gen}
	 Let $H$ be a connected linear $r$-uniform hypergraph.
Then
\begin{equation} \label{spec}
\rho (H)\le \frac{n-1}{r-1},
\end{equation}
with equality if and only if $H$ is a $2$-$(n,r,1)$ design.
If $n,r$ satisfy (\ref{Cond1}) in Theorem \ref{T-Cond1}, a $2$-$(n,r,1)$ design exists for sufficiently large $n$.
\end{thm}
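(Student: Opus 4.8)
The plan is to bound the spectral radius by the maximum degree and then bound the maximum degree using linearity. Recall that $\rho(H)$ is the largest $H$-eigenvalue of the adjacency tensor $\mathcal{A}(H)$; since $H$ is connected the tensor is weakly irreducible, so by the Perron--Frobenius theorem for nonnegative tensors there is a strictly positive eigenvector $x=(x_v)_{v\in V(H)}$ with $\mathcal{A}(H)x^{r-1}=\rho(H)\,x^{[r-1]}$, i.e. for each vertex $v$,
$$\rho(H)\,x_v^{r-1}=\sum_{e\ni v}\prod_{u\in e\setminus\{v\}}x_u.$$
First I would pick a vertex $w$ with $x_w=\max_{v}x_v$ and evaluate this identity at $w$: replacing each factor $x_u$ by $x_w$ gives $\rho(H)\,x_w^{r-1}\le d(w)\,x_w^{r-1}$, hence $\rho(H)\le d(w)\le\Delta(H)$. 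Second, I would exploit linearity: the $d(v)$ edges through a fixed vertex $v$ pairwise meet only in $v$, so they contain $d(v)(r-1)$ distinct vertices other than $v$, whence $d(v)(r-1)\le n-1$ and therefore $\Delta(H)\le\frac{n-1}{r-1}$. Combining the two inequalities yields (\ref{spec}).

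For the equality case, suppose $\rho(H)=\frac{n-1}{r-1}$. Then both inequalities above are forced to be tight, so $\rho(H)=\Delta(H)=\frac{n-1}{r-1}$. Tracing back the first estimate at the maximizing vertex $w$ forces $\prod_{u\in e\setminus\{w\}}x_u=x_w^{r-1}$ for every edge $e\ni w$, and since each $x_u\le x_w$ and all entries are positive, every vertex lying in an edge through $w$ must itself attain the value $x_w$. I would then propagate this equality along edges using connectivity to conclude that $x$ is constant, so the eigenvalue equation makes $H$ regular of degree $\frac{n-1}{r-1}$. Finally, regularity together with linearity pins down the design structure: for each vertex $v$ the $d(v)(r-1)=\frac{n-1}{r-1}(r-1)=n-1$ vertices lying in edges through $v$ are pairwise distinct and hence exhaust $V(H)\setminus\{v\}$, so every other vertex shares exactly one edge with $v$. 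As this holds for all $v$, every pair of vertices lies in exactly one edge, i.e. $H$ is a $2$-$(n,r,1)$ design.

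The converse is a short check: in a $2$-$(n,r,1)$ design every vertex has degree $\frac{n-1}{r-1}$, so the all-ones vector is a positive eigenvector with eigenvalue $\frac{n-1}{r-1}$; since any two vertices lie in a common edge the design is connected, and uniqueness in Perron--Frobenius then gives $\rho(H)=\frac{n-1}{r-1}$. The existence statement in the last sentence would not be proved from scratch but cited: the divisibility constraints recorded as condition (\ref{Cond1}), namely $(r-1)\mid(n-1)$ and $r(r-1)\mid n(n-1)$, are precisely those required, and by the known existence theory for combinatorial designs a $2$-$(n,r,1)$ design exists for all sufficiently large $n$ satisfying them. The only genuinely delicate point is the equality analysis: I must invoke the Perron--Frobenius theory for weakly irreducible nonnegative tensors to guarantee a strictly positive, essentially unique eigenvector, and then carry out the connectivity-based propagation of equalities with care, since, unlike in the graph case, the term $\prod_{u\in e\setminus\{v\}}x_u$ couples several coordinates simultaneously.
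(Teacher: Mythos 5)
Your proof is correct, but it takes a genuinely different route from the paper's. The paper deduces the bound from its shadow-graph lemma (Lemma \ref{Lconn}): since $\rho(H)\le\frac{1}{r-1}\rho(\partial H)$ and $\partial H$ is a graph on $n$ vertices, $\rho(\partial H)\le n-1$ gives the result at once, and the equality case falls out of the lemma's equality condition ($H$ regular) together with $\partial H$ being complete. You instead combine the degree bound $\rho(H)\le\Delta(H)$ (the paper's Lemma \ref{deg}, which you re-derive via the eigenvalue equation at a maximal coordinate) with the purely combinatorial observation that linearity forces $\Delta(H)\le\frac{n-1}{r-1}$, and you handle equality by propagating the constancy of the Perron vector along edges and then counting to see that the $n-1$ vertices covered by the edges through each vertex exhaust the complement. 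Both arguments are sound; yours is more elementary and self-contained, avoiding the AM--GM computation behind Lemma \ref{Lconn}, whereas the paper's choice has the advantage of reusing the shadow-graph machinery that drives all of its other theorems (Theorems \ref{sp-crit} and \ref{N-2} genuinely need Lemma \ref{Lconn}, so for the paper it comes for free here). Your treatment of the converse and the citation of Wilson's existence theorem for condition (\ref{Cond1}) match the paper's.
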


It is known that for an $r$-uniform hypergraph $H$ with $n$ vertices and $e(H)$ edges,
\begin{equation}\label{sp-ed} \rho(H) \ge \frac{r e(H)}{n}.\end{equation}
So we can have the following corollary immediately.
	
\begin{cor}	\label{C-cri}
Let $F$ be a $(k+1)$-color critical graph with $k \ge r$.
Then, for sufficiently large $n$,
\begin{equation}\label{cri}
\ex_{r}^{\lin}(n,F^{r} )\le\frac{n^{2} (k-1)}{kr(r-1)}.
 \end{equation}
If $n=km$ and $m,k,r$ satisfy (\ref{Cond2}) in Theorem \ref{T-Cond2}, the inequality in (\ref{cri}) can hold as equality for sufficiently large $m$
\end{cor}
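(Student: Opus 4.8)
The plan is to combine the spectral upper bound of Theorem \ref{sp-crit} with the elementary density bound (\ref{sp-ed}). Let $H$ be an arbitrary $F^{r}$-free linear $r$-uniform hypergraph on $n$ vertices. Since $\spex_{r}^{\lin}(n,F^{r})$ is by definition the maximum of $\rho$ over all such hypergraphs, we have $\rho(H)\le \spex_{r}^{\lin}(n,F^{r})$, so Theorem \ref{sp-crit} gives $\rho(H)\le \frac{n(k-1)}{k(r-1)}$ for sufficiently large $n$. Feeding this into (\ref{sp-ed}), which reads $\frac{r\,e(H)}{n}\le \rho(H)$, and solving for $e(H)$ yields
\[
e(H)\le \frac{n}{r}\cdot\frac{n(k-1)}{k(r-1)}=\frac{n^{2}(k-1)}{kr(r-1)}.
\]
Taking the maximum over all admissible $H$ produces exactly (\ref{cri}), so the upper bound is immediate.

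For the equality statement I would first recall the standard fact that equality holds in (\ref{sp-ed}), namely $\rho(H)=\frac{r\,e(H)}{n}$, precisely when $H$ is regular; this is the hypergraph analogue of the spectral radius of a connected graph equalling its average degree iff the graph is regular, the quantity $\frac{r\,e(H)}{n}$ being the average degree of an $r$-uniform hypergraph. The plan is then to invoke the extremal hypergraph furnished by Theorem \ref{T-Cond2} under hypothesis (\ref{Cond2}): when $n=km$ this construction attains equality in (\ref{spcri}), so its spectral radius equals $\frac{n(k-1)}{k(r-1)}$, and—this is the point to verify—it is regular. Granting regularity, both (\ref{sp-ed}) and the bound of Theorem \ref{sp-crit} become equalities for this hypergraph, and the same chain of identities gives $e(H)=\frac{n^{2}(k-1)}{kr(r-1)}$, showing (\ref{cri}) is tight for sufficiently large $m$.

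The main obstacle is the equality case rather than the upper bound: one must confirm that the construction of Theorem \ref{T-Cond2} is simultaneously $F^{r}$-free, linear, \emph{and} regular, so that a single hypergraph on $n=km$ vertices saturates both (\ref{sp-ed}) and (\ref{spcri}). The regularity should follow from the balanced, design-like structure of the extremal object (each of the $k$ parts having $m$ vertices, and each vertex lying in the same number of edges), and the number-theoretic conditions (\ref{Cond2}) are presumably exactly what guarantees the existence of such a regular configuration for large $m$. Once these structural properties are established in the course of proving Theorem \ref{T-Cond2}, the corollary follows with no further computation.
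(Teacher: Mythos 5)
Your proof is correct and follows essentially the same route as the paper: the upper bound is the identical combination of (\ref{sp-ed}) with Theorem \ref{sp-crit}, and the extremal example is the same $2$-GDD of group type $m^{k}$ supplied by Theorem \ref{T-Cond2}, whose $F^r$-freeness, linearity and regularity the paper establishes in the proof of Theorem \ref{sp-crit}. The only cosmetic difference is that you deduce its edge count from regularity together with the spectral radius, whereas the paper reads off the $\frac{m^{2}k(k-1)}{r(r-1)}$ edges directly from the design parameters.
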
	
	
\begin{cor}\label{C-edge}	Let $H$ be a linear $r $-uniform hypergraph. Then
	\begin{equation}\label{edge}
  e(H)\le \frac{n(n-1)}{r(r-1)},
   \end{equation}
with equality if and only if $H$ is a $2$-$(n,r,1)$ design.
If $n,r$ satisfy (\ref{Cond1}) in Theorem \ref{T-Cond1}, a $2$-$(n,r,1)$ design exists for sufficiently large $n$.
\end{cor}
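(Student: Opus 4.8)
The plan is to derive the edge bound directly from Theorem \ref{sp-gen} together with the elementary inequality (\ref{sp-ed}), mirroring the way Corollary \ref{C-cri} is obtained from Theorem \ref{sp-crit}. First I would treat the case that $H$ is connected. Here (\ref{sp-ed}) gives $\rho(H) \ge r\,e(H)/n$, while Theorem \ref{sp-gen} gives $\rho(H) \le (n-1)/(r-1)$. Chaining the two bounds yields $r\,e(H)/n \le (n-1)/(r-1)$, which rearranges at once to $e(H)\le n(n-1)/(r(r-1))$, i.e. (\ref{edge}).

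To remove the connectivity hypothesis of Theorem \ref{sp-gen}, I would decompose $H$ into its connected components $H_1,\dots,H_c$, where $H_i$ has $n_i$ vertices and $e_i$ edges, so that $\sum_i n_i = n$ and $\sum_i e_i = e(H)$. Applying the connected case to each $H_i$ (trivial components consisting of a single isolated vertex contribute $e_i=0$ and cause no difficulty) gives $e_i \le n_i(n_i-1)/(r(r-1))$, and summing over $i$ reduces the claim to the numerical inequality $\sum_i n_i(n_i-1) \le n(n-1)$. The latter follows from $\sum_i n_i^2 \le \left(\sum_i n_i\right)^2 = n^2$ combined with $\sum_i n_i = n$, completing the proof of (\ref{edge}).

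For the equality characterization I would trace both steps backward. Equality in (\ref{edge}) forces equality in the summation step, that is $\sum_i n_i^2 = \left(\sum_i n_i\right)^2$, which holds only when $H$ has a single nontrivial component and no isolated vertices; in particular $H$ is connected. On this component, equality forces equality in Theorem \ref{sp-gen}, so $H$ is a $2$-$(n,r,1)$ design. Conversely a $2$-$(n,r,1)$ design is connected and $\frac{n-1}{r-1}$-regular, and a direct count of its blocks gives exactly $e(H)=n(n-1)/(r(r-1))$, so equality is attained. A transparent double-counting argument confirms this analysis: since $H$ is linear, the $\binom{r}{2}$ vertex-pairs covered by distinct edges are disjoint, whence $e(H)\binom{r}{2}\le\binom{n}{2}$, with equality precisely when every pair of vertices lies in exactly one edge, i.e. when $H$ is a $2$-$(n,r,1)$ design.

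The only point requiring care — and the main (minor) obstacle — is the reduction from the connected hypothesis of Theorem \ref{sp-gen} to arbitrary $H$, together with verifying that the equality case genuinely excludes both multiple components and isolated vertices, rather than merely forcing each component separately to be a design. The final assertion that such designs exist for all $n,r$ satisfying (\ref{Cond1}) and $n$ sufficiently large is then quoted directly from Theorem \ref{T-Cond1}.
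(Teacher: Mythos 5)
Your proposal is correct and follows essentially the same route as the paper, which likewise obtains (\ref{edge}) by chaining the inequality (\ref{sp-ed}), $e(H)\le \frac{n}{r}\rho(H)$, with the bound $\rho(H)\le\frac{n-1}{r-1}$ of Theorem \ref{sp-gen} and then quoting Theorem \ref{T-Cond1} for existence. The one genuine addition on your side is the component decomposition that removes the connectivity hypothesis of Theorem \ref{sp-gen} (a point the paper's proof passes over silently), and your closing double-counting observation $e(H)\binom{r}{2}\le\binom{n}{2}$ is a still more elementary, fully self-contained alternative that the paper does not use.
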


As an application of the above results, we have the following results.	

\begin{lem}\label{N-1}	 Let $k,l,r$ be positive integers such that $k \ge 3$, $l\ge 2$ and $r \ge 2$, and let $c$ be a positive real number.
Then there exists an $n_0=n_{0}(c,k,l,r)$ such that if $H$ is
a linear $r$-uniform hypergraph on $n \ge n_0$ vertices satisfying
\begin{equation} \rho(H)\ge \frac{n}{r-1}\left(1-\frac{1}{k-1}+c\right), \end{equation}
then $H$ contains a $K_{k}(l,\ldots,l)^{r}$.
\end{lem}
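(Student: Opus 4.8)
The plan is to transfer the problem to the shadow graph of $H$, invoke a spectral Erd\H{o}s--Stone--Simonovits theorem there, and finally lift the resulting complete multipartite subgraph back to an expansion inside $H$.

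First I would pass to the shadow graph $\partial H$, the simple graph on $V(H)$ in which two vertices are adjacent whenever they lie in a common edge of $H$. Since $H$ is linear, the edges of $\partial H$ split into edge-disjoint cliques $K_r$, one per hyperedge, and the connection between the spectral radius of a linear hypergraph and that of its shadow (the same estimate underlying Theorem~\ref{sp-gen}) gives $\rho(\partial H)\ge (r-1)\rho(H)$; concretely, evaluating the Rayleigh quotient of $\partial H$ at $y_v=x_v^{r/2}$, where $x$ is the Perron vector of $H$ with $\sum_v x_v^{r}=1$, and applying AM--GM edgewise yields $\rho(\partial H)\ge y^{\top}A_{\partial H}y\ge (r-1)\rho(H)$. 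Combined with the hypothesis this gives
\[
\rho(\partial H)\ \ge\ (r-1)\cdot\frac{n}{r-1}\Big(1-\frac{1}{k-1}+c\Big)=\Big(1-\frac{1}{k-1}+c\Big)n .
\]
Next I would apply the spectral Erd\H{o}s--Stone--Simonovits theorem of Nikiforov~\cite{nikiforov2009spectral} to the $n$-vertex graph $\partial H$. Because $1-\tfrac{1}{k-1}$ is exactly the Tur\'an-density threshold for $K_k$, the bound above forces $\partial H$ to contain a complete $k$-partite subgraph $K_k(t,\ldots,t)$ with all parts of any prescribed constant size $t$, provided $n\ge n_0(c,k,t)$. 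I would fix $t=t(k,l,r)$ large (polynomial in $k,l,r$) and choose $n_0$ accordingly.

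The main obstacle is the final step: lifting the blow-up $K_k(t,\ldots,t)\subseteq\partial H$, sitting on a core $C=W_1\cup\cdots\cup W_k$, to a genuine copy of $K_k(l,\ldots,l)^{r}$ in $H$. Each cross pair $\{u,v\}$ (with $u\in W_i$, $v\in W_j$, $i\neq j$) lies in a \emph{unique} hyperedge $e_{uv}$ by linearity, but to realise the expansion I must select $V_i\subseteq W_i$ with $|V_i|=l$ so that (i) each $e_{uv}$ meets the selected core $\bigcup_m V_m$ in exactly $\{u,v\}$, and (ii) the extension sets $e_{uv}\setminus\{u,v\}$ are pairwise disjoint. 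Condition (i) fails only when some hyperedge contains three selected core vertices, and (ii) fails, beyond the cases already excluded by (i), only when two hyperedges each carrying a selected cross pair meet in a single extension vertex $w$. Here linearity is decisive: the identity $\sum_{e}\binom{|e\cap C|}{2}\le\binom{|C|}{2}$ together with $|e\cap C|\le r$ bounds the number of collinear triples in $C$ by $O_r\big((kt)^2\big)$, and, since the hyperedges through any fixed $w$ have pairwise-disjoint traces on $C$, the number of offending $w$-configurations is $O_r\big((kt)^3\big)$; both are of smaller order than the $\Theta\big((kt)^3\big)$ triples and $\Theta\big((kt)^4\big)$ quadruples available in $C$.

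A first-moment argument applied to a uniformly random choice of $V_i\subseteq W_i$ of size $l$ then finishes the extraction: each collinear triple survives with probability $O\big((l/t)^3\big)$ and each offending $w$-configuration with probability $O\big((l/t)^4\big)$, so the expected number of bad configurations is $O\big(k^2l^3/t+k^3l^4/t\big)$, which is below $1$ once $t$ is large in terms of $k,l,r$. For such a choice, distinct cross pairs are carried by distinct hyperedges with pairwise-disjoint extensions, and these hyperedges together with the core $\bigcup_m V_m$ display the desired copy of $K_k(l,\ldots,l)^{r}$ in $H$. I expect the bookkeeping in this extraction, rather than the spectral input, to be the delicate part; everything is otherwise forced once $t$, and hence $n_0$, is taken large enough.
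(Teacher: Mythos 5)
Your reduction is exactly the paper's: pass to the shadow graph, use the inequality $\rho(\partial H)\ge (r-1)\rho(H)$ from Lemma \ref{Lconn}, and invoke Nikiforov's spectral Erd\H{o}s--Stone theorem (Lemma \ref{N-sp}) to find a large complete $k$-partite subgraph of $\partial H$. Where you genuinely diverge is in the lifting step. The paper keeps the parts at size $\lfloor (c/k^k)^k\ln n\rfloor$ (growing with $n$) and extracts the expansion by a deterministic greedy procedure: it picks the $kl$ core vertices one at a time, at each step avoiding the constant-size forbidden sets $A_{j-1}\cup B_{j-1}\cup C_{j-1}$ (previously chosen vertices, extension vertices of edges covering chosen pairs, and extension vertices of edges joining chosen vertices to those), which is possible because the parts outgrow these sets. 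You instead fix a constant part size $t=t(k,l,r)$, count the bad configurations (collinear triples in the core, and pairs of core-heavy hyperedges meeting in an extension vertex) via the linearity bound $\sum_e\binom{|e\cap C|}{2}\le\binom{|C|}{2}$, and kill them by a first-moment argument over random $l$-subsets of the parts. Both arguments are correct; yours trades the paper's explicit step-by-step bookkeeping for a cleaner union bound and has the mild advantage of needing only constant-size parts from the spectral input (so it would also work with weaker stability statements), while the paper's greedy version avoids any probabilistic language and matches the construction already written out in the proof of Theorem \ref{sp-crit}. The only point worth making explicit in your write-up is that Nikiforov's theorem as cited produces $K_k(s,\ldots,s,t')$ with $s=\Omega(\ln n)$, from which your constant-size $K_k(t,\ldots,t)$ is extracted for $n$ large; this is immediate but should be said.
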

	
\begin{thm}\label{N-2}
 Let $F$ be a graph with chromatic number $k$ with $k \ge r+1 \ge 3$.
Then
 \begin{equation} \spex_{r}^{\lin}(n,F^{r})=n\left(\frac{1}{r-1}\left(1-\frac{1}{k-1}\right)+o(1)\right).
 \end{equation}
\end{thm}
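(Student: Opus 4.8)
The plan is to prove matching upper and lower bounds, both resting on one elementary observation about how the expansion interacts with the shadow. For a linear $r$-uniform hypergraph $H$, let $\partial H$ denote its \emph{shadow graph}, the graph on $V(H)$ whose edges are the pairs of vertices lying in a common hyperedge of $H$. If $H$ contains a copy of $F^{r}$, then $\partial H$ contains a copy of $F$, since each edge $\{x,y\}$ of $F$ sits inside the hyperedge $\{x,y\}\cup S_{\{x,y\}}$ of the embedded $F^{r}$ and hence becomes an edge of $\partial H$. Consequently, whenever $\chi(F)=k$ and $\chi(\partial H)\le k-1$, the hypergraph $H$ is automatically $F^{r}$-free. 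This is the device that drives the lower bound, while Lemma \ref{N-1} drives the upper bound.

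For the upper bound I would first record that, since $\chi(F)=k$, a proper $k$-coloring partitions $V(F)$ into at most $k$ independent sets each of size at most $|V(F)|$, so $F$ is a subgraph of the complete $k$-partite graph $K_{k}(l,\dots,l)$ with $l=|V(F)|$. Because the expansion is monotone under taking subgraphs, $F^{r}\subseteq K_{k}(l,\dots,l)^{r}$, and therefore every $F^{r}$-free hypergraph is $K_{k}(l,\dots,l)^{r}$-free. Fixing any $c>0$ and applying the contrapositive of Lemma \ref{N-1} (valid since $k\ge r+1\ge 3$, $r\ge 2$ and $l=|V(F)|\ge 2$), every $F^{r}$-free linear $r$-uniform hypergraph on $n\ge n_{0}(c,k,l,r)$ vertices satisfies $\rho(H)<\frac{n}{r-1}\bigl(1-\frac{1}{k-1}+c\bigr)$. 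Letting $c\to 0$ yields $\spex_{r}^{\lin}(n,F^{r})\le \frac{n}{r-1}\bigl(1-\frac{1}{k-1}\bigr)+o(n)$.

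For the lower bound I would exhibit an $F^{r}$-free hypergraph of nearly maximum edge count and invoke (\ref{sp-ed}). Partition $V$ into $k-1$ classes of size $\bigl(1+o(1)\bigr)n/(k-1)$; this is exactly where the hypothesis $k\ge r+1$, i.e.\ $k-1\ge r$, is used, as each hyperedge will be a transversal meeting exactly $r$ of the classes in one vertex apiece. Let $H$ be a linear packing of such transversals, so distinct hyperedges cover each cross-class pair at most once; then $\partial H$ is a subgraph of the complete $(k-1)$-partite graph on the classes, hence $(k-1)$-colorable, and by the shadow observation $H$ is $F^{r}$-free. Each transversal covers $\binom{r}{2}$ of the $\binom{k-1}{2}(n/(k-1))^{2}(1+o(1))$ cross-class pairs, so no such packing exceeds $\frac{(k-2)n^{2}}{(k-1)r(r-1)}(1+o(1))$ edges; conversely, by the R\"odl nibble (whose near-regularity and small-codegree requirements are immediate for the auxiliary design on transversals) one can realize a $(1-o(1))$ fraction of this bound. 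Hence $e(H)\ge (1-o(1))\frac{(k-2)n^{2}}{(k-1)r(r-1)}$, and (\ref{sp-ed}) gives $\rho(H)\ge \frac{r\,e(H)}{n}\ge \frac{n}{r-1}\bigl(1-\frac{1}{k-1}\bigr)-o(n)$. Combining the two bounds proves the claimed equality.

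The main obstacle is the lower-bound construction \emph{uniformly in $n$}: one must produce, for every large $n$ and not merely for those $n$ meeting the divisibility conditions for an exact $\{r\}$-group-divisible design of type $(n/(k-1))^{k-1}$, a linear transversal packing covering all but an $o(n^{2})$ fraction of the cross-class pairs. Managing the arithmetic of the part sizes and guaranteeing this packing density for arbitrary $n$ — either through the nibble or by building an exact design on a slightly smaller vertex set and padding with isolated vertices (which do not affect $\rho$) — is the delicate step; the upper bound, by contrast, follows directly from Lemma \ref{N-1} together with the monotonicity of the shadow under expansion.
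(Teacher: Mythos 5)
Your proof is correct and follows essentially the same route as the paper: the upper bound comes from Lemma \ref{N-1} applied via $F^{r}\subseteq K_{k}(l,\dots,l)^{r}$ with $c\to 0$, and the lower bound from a near-perfect transversal design on $k-1$ parts whose shadow is $(k-1)$-partite and hence $F$-free. The only real difference is in the lower bound's treatment of general $n$: the paper invokes Theorem \ref{T-Cond2} to get an exact $2$-GDD of type $m^{k-1}$ only for $n=m(k-1)$ satisfying the divisibility conditions (and is silent about other $n$), whereas you explicitly close that gap via the nibble or by padding an exact design with isolated vertices --- a worthwhile refinement, since as written the paper's construction only establishes the lower bound along a subsequence of $n$.
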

	
	The rest of this paper is organized as follows.
In Section 2, we introduce the eigenvalues and eigenvectors of a tensor, the hypergraphs constructed by designs, and some necessary lemmas for the latter discussion. We prove the main results in Section 3.

\section{Preliminaries}
\subsection{Tensors and hypergraphs}
For positive integers $n,r$, a complex \emph{tensor} (also called hypermatrix) $\mathcal{A} =( a_{i_{1}i_{2}\dots i_{r}})$ of order $r$ and dimension $n$ refers to a multidimensional array with entries $a_{i_{1}i_{2}\ldots i_{r}   }\in \mathbb{C}$ for all $i_{1},i_{2},\dots,i_{r}\in [n]:= \{ 1,2,\ldots,n\}$. A tensor $\mathcal{A}$ is called \emph{symmetric}
	if its entries are invariant under any permutation of their indices.
	In 2005, Qi~\cite{qi2005eigenvalues} and Lim~\cite{lim2005singular} independently introduced the eigenvalues of tensors.
If there exists a number $\lambda \in \mathbb{C}$ and a nonzero vector $x \in \mathbb{C}^{n}$	such that
	\begin{equation}\label{ev}\mathcal{A} x^{r-1}=\lambda x^{[r-1] },
\end{equation}
then $\lambda$ is called an \emph{eigenvalue} of $\mathcal{A}$, and ${x}$ is called an
    \emph{eigenvector} of $\mathcal{A}$ corresponding to the eigenvalue $\lambda$,
    where  $\mathcal{A} {x}^{r-1}$ is a vector in $\mathbb{C}^{n}$ whose $i$-th component is given by
    \[  (\mathcal{A} {x}^{r-1} )_{i} =\sum_{i_{2},\ldots,i_{r}=1  }^{n} a_{ii_{2}\dots i_{r}} x_{i_{2} }\cdots x_{i_{r}}, i\in [n],\]
and $x^{[r-1]}:=(x_1^{r-1},\cdots,x_n^{r-1})$.
The \emph{spectral radius} of $\A$, denoted by $\rho(\mathcal{A})$, is the maximum modulus of the eigenvalues of $\mathcal{A}$.

The Perron-Frobenius theorem for nonnegative matrices was generalized to nonnegative tensors by Chang et al. \cite{chang2008perron}, Friedland et al.
\cite{friedland2013perron} and Yang et al. \cite{YY2011-2}.
Here we list part of the theorem, where the weak irreducibility of nonnegative tensors can be referred to \cite{friedland2013perron}.

 \begin{thm}[\cite{chang2008perron,friedland2013perron,YY2011-2}]\label{PF}
 Let $\mathcal{A}$ be a nonnegative tensor of order $r$ and dimension $n$.
 Then the following statements hold.

   \begin{itemize}
    \item[(1)] $\rho(\mathcal{A})$ is an eigenvalue of $\mathcal{A}$ corresponding to a nonnegative eigenvector.

    \item[(1)] If furthermore $\mathcal{A}$ is weakly irreducible, then
    $\rho(\mathcal{A})$ is the unique eigenvalue of $\mathcal{A}$ corresponding to a unique positive eigenvector up to a scalar.

    \end{itemize}
 \end{thm}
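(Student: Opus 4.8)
The plan is to recast the eigenvalue equation (\ref{ev}) as a fixed-point problem for a positively homogeneous, order-preserving self-map of the nonnegative orthant, and then run a Perron--Frobenius argument in the style of Chang--Pearson--Zhang and Friedland--Gaubert--Han. First I would define $F:\mathbb{R}^n_{\ge 0}\to\mathbb{R}^n_{\ge 0}$ by $F(x)_i=\big((\mathcal{A}x^{r-1})_i\big)^{1/(r-1)}$. Because the entries of $\mathcal{A}$ are nonnegative, $F$ is continuous, positively homogeneous of degree $1$ (so $F(tx)=tF(x)$ for $t\ge0$) and monotone (if $x\le y$ coordinatewise then $F(x)\le F(y)$). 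A vector $x\ge0$ satisfies $\mathcal{A}x^{r-1}=\lambda x^{[r-1]}$ with $\lambda\ge0$ exactly when $F(x)=\lambda^{1/(r-1)}x$, so the nonnegative eigenpairs of $\mathcal{A}$ correspond to those of $F$.

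For the existence claim (1), I would restrict the normalized map $x\mapsto F(x)/\|F(x)\|_1$ to the probability simplex $\Delta=\{x\ge0:\sum_i x_i=1\}$ and invoke Brouwer's fixed-point theorem. The one technical nuisance is that $F(x)$ can vanish on $\partial\Delta$; to sidestep this I would first apply the argument to the strictly positive tensor $\mathcal{A}_\varepsilon=\mathcal{A}+\varepsilon\mathcal{J}$, where $\mathcal{J}$ is the all-ones tensor, obtaining for each $\varepsilon>0$ a positive eigenvector $x_\varepsilon$ with a nonnegative eigenvalue, and then pass to a subsequential limit as $\varepsilon\to0$ using compactness of $\Delta$. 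This produces a nonnegative eigenvector $x_0$ of $\mathcal{A}$ with a nonnegative eigenvalue $\lambda_0$.

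Next I would verify that $\lambda_0=\rho(\mathcal{A})$ via a Collatz--Wielandt characterization,
\[
\lambda_0=\max_{x>0}\ \min_{1\le i\le n}\frac{(\mathcal{A}x^{r-1})_i}{x_i^{r-1}}.
\]
For an arbitrary complex eigenpair $(\mu,y)$, the triangle inequality gives the coordinatewise bound $|\mu|\,|y|^{[r-1]}=|\mathcal{A}y^{r-1}|\le \mathcal{A}|y|^{r-1}$; feeding $|y|$, after a harmless strictly positive perturbation to handle vanishing coordinates, into the min--max formula yields $|\mu|\le\lambda_0$. Hence $\lambda_0=\rho(\mathcal{A})$ and statement (1) follows.

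Finally, for statement (2) I would assume $\mathcal{A}$ weakly irreducible. Weak irreducibility first upgrades the nonnegative eigenvector to a strictly positive one: if a nonempty proper subset of coordinates vanished, the zero pattern of $\mathcal{A}$ would decompose it into invariant blocks, contradicting weak irreducibility. For uniqueness I would endow the open positive cone with Hilbert's projective metric and show that the projective action of $F$ is nonexpansive, and in fact strictly contractive under the primitivity provided by weak irreducibility; the Birkhoff--Hopf theorem then delivers a unique positive fixed direction, i.e. a unique positive eigenvector up to scaling, necessarily at the eigenvalue $\rho(\mathcal{A})$. I expect this last step to be the main obstacle: in contrast to the symmetric case there is no variational shortcut, since the Rayleigh-type form $\mathcal{A}x^r$ need not have $\mathcal{A}x^{r-1}$ as its natural gradient direction for non-symmetric $\mathcal{A}$, so the uniqueness must be extracted from the contraction properties of the nonlinear map $F$ in the Hilbert metric, which is the technically most delicate part.
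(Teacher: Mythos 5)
A preliminary remark: the paper offers no proof of Theorem \ref{PF} at all --- it is imported verbatim from \cite{chang2008perron,friedland2013perron,YY2011-2} --- so your attempt can only be measured against those sources. Your overall architecture (the order-preserving, degree-one homogeneous map $F(x)_i=\bigl((\mathcal{A}x^{r-1})_i\bigr)^{1/(r-1)}$, Brouwer's theorem on the simplex for the positive perturbation $\mathcal{A}+\varepsilon\mathcal{J}$, a compactness limit as $\varepsilon\to 0$, a Collatz--Wielandt bound to identify the limit eigenvalue with $\rho(\mathcal{A})$, and positivity of the eigenvector forced by weak irreducibility) is exactly the strategy of Chang--Pearson--Zhang and Friedland--Gaubert--Han. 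However, two of your steps would fail as stated.

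First, the Collatz--Wielandt formula $\lambda_0=\max_{x>0}\min_{i}(\mathcal{A}x^{r-1})_i/x_i^{r-1}$ is false for general (reducible) nonnegative tensors: already for the matrix $\mathrm{diag}(1,0)$ the right-hand side equals $0$ while $\rho=1$, and your proposed perturbation of $|y|$ to $|y|+\delta\mathbf{1}$ does not repair this, since a coordinate with $y_i=0$ can still contribute a ratio tending to $0$. The correct formula takes the maximum over nonzero $x\ge 0$ with the minimum restricted to the support of $x$; with that version your triangle-inequality bound $|\mu|\,|y|^{[r-1]}\le \mathcal{A}|y|^{r-1}$ does yield $|\mu|\le\rho(\mathcal{A})$. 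Second, for uniqueness you appeal to ``the primitivity provided by weak irreducibility'' to obtain strict contraction in Hilbert's projective metric; but weak irreducibility does not imply primitivity. Already the irreducible matrix $\left(\begin{smallmatrix}0&1\\1&0\end{smallmatrix}\right)$ acts as a Hilbert-metric isometry, not a contraction, yet its positive eigenvector is unique up to scale --- so the Birkhoff--Hopf mechanism you invoke cannot by itself deliver statement (2). The standard route, and the one used in the cited papers, is a two-eigenvector comparison argument: given positive vectors $u,v$ with $\mathcal{A}u^{r-1}=\lambda u^{[r-1]}$ and $\mathcal{A}v^{r-1}\ge\mu v^{[r-1]}$, consider $t^{*}=\min_i v_i/u_i$ and use the strong connectivity of the digraph encoding the zero pattern of $\mathcal{A}$ to force $\mu\le\lambda$ and, in the case of two genuine positive eigenpairs, $v=t^{*}u$. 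Until those two steps are replaced, the proof is not complete.
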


Let $H$ be an $r$-uniform hypergraph on
$n$ vertices $v_{1},v_{2},\ldots,v_{n}$.
    In 2012, Cooper and Dutle~\cite{cooper2012spectra} introduced the \emph{adjacency tensor} $\mathcal{A}(H)$ of $H$, which is an order $r$ dimension $n$ tensor
    whose $(i_{1},i_{2},\ldots,i_{r})$-entry is given by
    \[  a_{i_{1}i_{2}\dots i_{r}}=\begin{cases}
    	\frac{1}{(r-1 )!},  & \text{ if } \{ v_{i_{1} },v_{i_{2} },\dots v_{i_{r} }    \}\in E(H ),   \\
    	~~0,  & \text{ otherwise }.
    \end{cases}      \]

    Clearly, $\A(H)$ is nonnegative and symmetric, and it is weakly irreducible if and only if $H$ is connected (\cite{friedland2013perron, YY2011-2}).
The eigenvalues, eigenvectors of $H$ are referring to those of $\A(H)$.
In particular, denote by $\rho(H)$ the spectral radius of $H$ (or $\A(H)$).

   By Theorem \ref{PF}, if
   	$H$ is connected, then there exists a unique positive eigenvector $x$ up to a scalar corresponding to $\rho(H)$, called the \emph{Perron vector} of $H $.
    In addition, $\rho (H)$ is the optimal value of the following
   	maximization (\cite{qi2005eigenvalues})
   	\begin{equation}\label{max} \rho (H )=\max_{\|x\|_{r}=1}  x^\top \mathcal{A}(H)x^{r-1},
   \end{equation}
   and the optimal vector $x$ such that $\rho(H)=\A x^{r}$ is exactly the Perron vector of $H$,
   	where
   	\begin{equation}\label{r-form}  x^\top \mathcal{A}(H)x^{r-1}=\sum_{i_1,\ldots, i_r} a_{i_1,\ldots,i_r} x_{i_1}\cdots x_{i_r}=r \sum_{\{v_{i_1},\ldots,v_{i_r}\} \in E(H)}x_{v_{i_1}}\cdots x_{v_{i_r}}.
   \end{equation}
When $r=2$, the Eqs. (\ref{max}) and (\ref{r-form}) are exactly the expressions of the spectral radius and the quadratic form of a graph respectively.

   	\begin{lem}[\cite{cooper2012spectra}]\label{deg}
    Let $H$ be an $r$-uniform hypergraph. Let $\bar{d}$ be the
   	average degree of $H$ and $\Delta$ be the maximum degree of $H$.
   Then
   	\[ \bar{d} \leq \rho (H) \leq \Delta. \]
   	In particular, if $H$ is a $d$-regular hypergraph, then $\rho (H)=d$.
\end{lem}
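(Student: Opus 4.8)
The plan is to derive both inequalities directly from the two descriptions of $\rho(H)$ recorded above: the variational formula \eqref{max} for the lower bound, and the eigenvalue equation \eqref{ev} together with the Perron--Frobenius theorem (Theorem \ref{PF}) for the upper bound. Throughout I will use the counting identity $\sum_{i=1}^{n} d(v_i) = r\,e(H)$, valid because each of the $e(H)$ edges contributes $1$ to the degree of each of its $r$ vertices; consequently $\bar{d} = \frac{1}{n}\sum_{i} d(v_i) = \frac{r\,e(H)}{n}$.

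For the lower bound $\bar{d} \le \rho(H)$, I would simply feed a well-chosen test vector into \eqref{max}. Take $x = n^{-1/r}\mathbf{1}$, the normalized all-ones vector, so that $\|x\|_r = 1$. Using \eqref{r-form}, every edge contributes $\prod_{v\in e} x_v = (n^{-1/r})^{r} = n^{-1}$, hence $x^\top \mathcal{A}(H) x^{r-1} = r\sum_{e\in E(H)} n^{-1} = \frac{r\,e(H)}{n} = \bar{d}$. Since $\rho(H)$ is the maximum of $x^\top \mathcal{A}(H) x^{r-1}$ over unit vectors, this already gives $\rho(H)\ge \bar{d}$; this is exactly inequality \eqref{sp-ed}.

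For the upper bound $\rho(H)\le \Delta$, I would work with an eigenvector rather than a test vector. By Theorem \ref{PF}(1) there is a nonzero nonnegative eigenvector $x$ with $\mathcal{A}(H)x^{r-1} = \rho(H)\,x^{[r-1]}$. First I would simplify the $i$-th coordinate: accounting for the factor $\frac{1}{(r-1)!}$ in the entries of $\mathcal{A}(H)$ against the $(r-1)!$ orderings of the remaining vertices of each edge through $v_i$, one obtains $(\mathcal{A}(H)x^{r-1})_i = \sum_{e\ni v_i}\prod_{v_j\in e\setminus\{v_i\}} x_j$. Now pick an index $i$ with $x_i = \max_j x_j$; since $x\ne 0$ and $x\ge 0$ we have $x_i>0$. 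Because every $x_j\le x_i$, each product is at most $x_i^{r-1}$, and there are exactly $d(v_i)\le \Delta$ edges through $v_i$, so $\rho(H)\,x_i^{r-1} = \sum_{e\ni v_i}\prod_{v_j\in e\setminus\{v_i\}} x_j \le d(v_i)\,x_i^{r-1}\le \Delta\,x_i^{r-1}$. Dividing by $x_i^{r-1}>0$ yields $\rho(H)\le \Delta$.

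Finally, the regular case follows for free: if $H$ is $d$-regular then $\bar{d} = \Delta = d$, and the two bounds squeeze $\rho(H)=d$; alternatively one checks directly that $\mathbf{1}$ is an eigenvector with eigenvalue $d$ and invokes Perron--Frobenius. I expect the only genuinely delicate point to be the coordinate simplification in the upper bound, namely collapsing the $(r-1)!$ permutations against the normalizing constant, together with the mild care needed when $x$ is merely nonnegative rather than positive; the latter is resolved by choosing the maximizing coordinate, which guarantees $x_i>0$ irrespective of whether $H$ is connected.
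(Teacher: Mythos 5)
Your proof is correct and is essentially the standard argument for this lemma (which the paper itself does not prove but imports from Cooper--Dutle): the all-ones test vector in the variational formula \eqref{max} gives $\bar d=\frac{r\,e(H)}{n}\le\rho(H)$, and evaluating the eigenvalue equation at a maximal coordinate of a nonnegative Perron eigenvector gives $\rho(H)\le\Delta$. Both steps, including the collapse of the $(r-1)!$ orderings against the normalizing constant and the choice of the maximizing coordinate to ensure positivity, are handled correctly, and the regular case follows by squeezing.
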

   
One can refer \cite{FHB,FHBproc} for more about the spectral radius and its associated eigenvectors of nonnegative tensors and hypergraphs.   	

\subsection{Designs}
We will introduce two kinds of designs in the following.

  \begin{defi}[\cite{wilson1975existence}]
  A \emph{pairwise balanced design} of index $\mu$ (in brief, a $\lambda$-PBD) is a pair $(X,\mathcal{B})$ where $X$ is a set of points, $\mathcal{B}$ is a family of subsets of $X$ (called blocks), such that each $B\in \mathcal{B}$ contains at least two points, any
   	$2$-subset $\{x,y\}$ of $X$ are contained in exactly $\mu$ blocks $B$ of $\mathcal{B}$.
   \end{defi}

  A $\lambda$-PBD on $n$ points in which all blocks have the same cardinality $r$ is traditionally called a \emph{$(n,r,\mu)$-BIBD} (balanced incomplete block design), or a $2$-$(n,r,\mu)$ design.
   	
   \begin{defi}[\cite{mohacsy2011asymptotic}]
   Let $n$, $\mu$, $t$ be positive integers, and $K$ be a set of positive integers.
    A \emph{group divisible $t$-design} (or  $t$-GDD) of order $n$, index $\mu$ and block sizes from  $K$ is a triple $(X,\Gamma,\mathcal{B})$ such that
   	
   	\begin{itemize}

   \item[(1)] $X$ is a set of $n$ elements (called points),
   	
   	\item[(2)] $\Gamma =\{G_{1},G_{2},\cdots, \}$, a set of non-empty subsets of $X$ which partition $X$ (called groups),
   	
   	\item[(3)] $\mathcal{B}$ is a family of subsets of $X$ each of size form $K$ (called
   	blocks) such that each block $B\in \mathcal{B}$ intersects any given group in at most one point,
   	
   	\item[(4)] each $t$-set of points from $t$ distinct groups is contained in exactly $\mu$ blocks.
   \end{itemize}
   \end{defi}
   	
   If a $t$-GDD has $n_{i}$ groups of size $g_{i}$ for  $i \in [l]$, we denote its group type by $g_{1}^{n_{1}}g_{2}^{n_{2}}\ldots g_{l}^{n_{l}}$.
   In particular, a $t$-GDD with group type $m^r$ and block size $r$ (namely $K=\{r\}$) is called a \emph{transversal design}.

   	For a $2$-$(n,r,\mu)$ design, if we treat its points as vertices and  blocks as edges, we can get a hypergraph associated with the design.
   We will not distinguish the a design and a hypergraph associated with the design.
   So, a $2$-$(n,r,1)$ design is a $\frac{n-1 }{r-1 } $-regular linear hypergraph with $\frac{n(n-1)}{r(r-1)}$ edges.
     Similarly, a $2$-GDD of group type $m^{k}$ with block size $r$ and index $1$ is
  a $\frac{m(k-1)}{r-1}$-regular hypergraph with $\frac{m^{2}k (k-1) }{r(r-1)}$ edges.

   An important issue is the question of the existence of the above two designs or the associated hypergraphs.
   The following theorems characterized the existence of the above two designs.

   \begin{thm}[\cite{wilson1975existence}]\label{T-Cond1}
    Given positive integers $n,r$, a $2$-$(n,r,1)$ design exists  for all sufficiently large integers $n$ under the following conditions:
    \begin{equation}\label{Cond1}
     n-1 \equiv 0 \mod{(r-1)}, ~~~n(n-1) \equiv 0 \mod{r(r-1)}.
     \end{equation}
    \end{thm}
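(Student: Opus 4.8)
The plan is to recognize a $2$-$(n,r,1)$ design as precisely a pairwise balanced design on $n$ points in which every block has size exactly $r$, and then to invoke Wilson's recursive existence theory for such designs. First I would dispose of the easy direction, the necessity of (\ref{Cond1}): double-counting the pairs through a fixed point shows that each point lies in exactly $(n-1)/(r-1)$ blocks, forcing $(r-1)\mid(n-1)$, and counting incident pairs globally shows the number of blocks is $n(n-1)/(r(r-1))$, forcing $r(r-1)\mid n(n-1)$. All the content is in the converse, the existence for large $n$ satisfying these congruences.

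For sufficiency I would work in the language of PBD-closure. For a set $K$ of admissible block sizes, let $B(K)$ denote the set of orders of PBDs whose blocks all have size in $K$. The key structural fact is that $B(K)$ is closed under the PBD operation (inflating each block into a smaller PBD of the same kind), and that the two invariants $\alpha(K)=\gcd_{k\in K}(k-1)$ and $\beta(K)=\gcd_{k\in K}k(k-1)$ are respected along this closure: every $v\in B(K)$ satisfies $v\equiv 1\pmod{\alpha(K)}$ and $v(v-1)\equiv 0\pmod{\beta(K)}$. These are exactly the congruences in (\ref{Cond1}) when $K=\{r\}$, so the necessity above is the $\alpha,\beta$-invariance specialized to uniform block size.

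The constructive engine is Wilson's Fundamental Construction: starting from a master group divisible design (typically a truncated transversal design), one assigns nonnegative integer weights to its points, replaces each point by weight-many copies, and fills the inflated blocks using ingredient transversal designs and GDDs. The indispensable raw material is the abundance of transversal designs with $r$ groups, equivalently $r-2$ mutually orthogonal Latin squares, which exist for all sufficiently large orders (for instance from finite fields of prime-power order, together with the fact that the maximum number of MOLS of order $m$ tends to infinity). With these ingredients, plus a finite stock of small ``starter'' designs, one can realize designs in a prescribed range of admissible orders.

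The heart of the argument, and the step I expect to be the main obstacle, is the closure theorem: for any finite $K$, the set $B(K)$ contains \emph{every} sufficiently large integer $v$ with $v\equiv 1\pmod{\alpha(K)}$ and $v(v-1)\equiv 0\pmod{\beta(K)}$. Proving this requires combining the Fundamental Construction with a delicate number-theoretic packing argument that realizes all large admissible residue classes from finitely many ingredients; this is where the ``sufficiently large'' hypothesis and the real depth of Wilson's theorem reside, and controlling it carefully is far from routine. Finally I would apply the closure theorem with $K=\{r\}$, where $\alpha=r-1$, $\beta=r(r-1)$, and a PBD with all blocks of size $r$ is exactly a $2$-$(n,r,1)$ design, thereby obtaining existence for all large $n$ satisfying (\ref{Cond1}).
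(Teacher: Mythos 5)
This statement is not proved in the paper at all: it is imported verbatim from Wilson's 1975 paper \cite{wilson1975existence} and used as a black box, so there is no in-paper argument to compare your proposal against. Judged on its own terms, your write-up is an accurate roadmap of how Wilson's theorem is actually established --- the necessity direction via the two double counts (each point lies on $(n-1)/(r-1)$ blocks, and there are $n(n-1)/(r(r-1))$ blocks in total) is complete and correct, and your description of the sufficiency machinery (PBD-closure, the invariants $\alpha(K)$ and $\beta(K)$, the Fundamental Construction with weights, transversal designs from MOLS whose number tends to infinity) matches the genuine architecture of Wilson's proof.

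However, as a proof it has a real gap, and you have honestly flagged it yourself: the closure theorem asserting that $B(K)$ contains every sufficiently large admissible $v$ is precisely the content of the theorem being proved (specialize to $K=\{r\}$ and the statement is literally Theorem \ref{T-Cond1}), so invoking it is circular unless you actually carry out the recursive construction and the number-theoretic argument that realizes all large admissible residue classes from finitely many starter designs. That step occupies the bulk of Wilson's three-paper series and cannot be compressed into a paragraph of intent. For the purposes of this paper the correct move is the one the authors make: cite the result. If you want a self-contained treatment, you would need to either reproduce Wilson's argument in full or restrict to special cases (e.g.\ $r$ a prime power, where resolvable designs from affine geometries give explicit constructions) where existence can be shown directly.
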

   	
   \begin{thm}[\cite{mohacsy2011asymptotic}]\label{T-Cond2}
    Let $r$ and $k$ be positive integers with $2 \le r \le k$.
    Then there exists an integer $m_{0}=m_{0}(r,k)$ such that
   	for any integer $m \ge m_{0}$ there exists a group divisible design of group type $m^{k}$ with block size $r$ and index $1$ satisfying the condition:
    \begin{equation}\label{Cond2}	
     m(k-1) \equiv 0 \mod{r-1},~~~ m^{2}k(k-1)  \equiv 0 \mod{r(r-1)}.
      \end{equation}
    \end{thm}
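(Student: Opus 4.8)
The plan is to recast the design-theoretic statement as a graph (clique) decomposition problem and then appeal to Wilson's asymptotic existence machinery. First I would observe that, specializing the $t$-GDD definition to $t=2$ and $\mu=1$, a group divisible design of type $m^{k}$ with block size $r$ and index $1$ is precisely a partition of the edge set of the complete $k$-partite graph $K_{k\times m}$ (with $k$ parts of size $m$ and no edges inside a part) into copies of $K_{r}$. Indeed, a block is an $r$-set meeting every group in at most one point, hence an $r$-clique whose vertices automatically lie in distinct parts, and the index-$1$ condition says every cross pair lies in exactly one block, i.e.\ every edge of $K_{k\times m}$ is covered exactly once.

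Next I would verify that the congruences (\ref{Cond2}) are exactly the elementary necessary conditions for such a $K_{r}$-decomposition. Every vertex of $K_{k\times m}$ has degree $m(k-1)$, and each $K_{r}$ through it uses $r-1$ of its incident edges, so $r-1$ must divide $m(k-1)$, which is the first condition. Counting edges, $K_{k\times m}$ has $\binom{k}{2}m^{2}=\tfrac12 k(k-1)m^{2}$ of them while each $K_{r}$ has $\binom{r}{2}=\tfrac12 r(r-1)$, so $r(r-1)$ must divide $m^{2}k(k-1)$, which is the second condition. Thus the theorem asserts that these obvious necessary conditions become sufficient once $m$ is large.

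The substance is the sufficiency, and here I would build the design recursively from small ingredients via Wilson's Fundamental Construction (WFC): given a master GDD and a weight $w(\cdot)$ on its points such that, for every master block $B$, an ingredient GDD with group sizes $(w(x))_{x\in B}$, block size $r$, and index $1$ exists, one obtains a GDD whose group sizes are the weighted group sums. The base ingredients are transversal designs: a $TD(r,q)$, equivalently a GDD of type $q^{r}$ with block size $r$ and index $1$, is equivalent to $r-2$ mutually orthogonal Latin squares of order $q$, and since the maximal number $N(q)$ of such squares tends to infinity, $TD(r,q)$ exists for all sufficiently large $q$. Starting from a transversal design (or a truncated one) as master and inflating its points by a uniform weight, WFC converts the existence of these large transversal designs into the existence of a GDD of type $m^{k}$ for a suitable arithmetic progression of admissible $m$.

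The main obstacle, and the technical heart of the argument, is controlling the number theory so that \emph{every} sufficiently large $m$ satisfying (\ref{Cond2}) is realized, rather than only a sparse subsequence. This amounts to showing that the set of admissible group sizes is, for large $m$, the full arithmetic progression cut out by the two congruences. I would handle this as in Wilson's PBD-closure theory: establish that the admissible set is closed under the relevant product and ``filling'' operations, identify its two characteristic congruences (governed by $r-1$ and $r(r-1)$ together with the factor $k(k-1)$), and invoke the eventual-periodicity (numerical-semigroup) phenomenon to conclude that it contains every large $m$ in the progression, with a Chinese Remainder Theorem bookkeeping reconciling the divisibility produced at each recursive step with the target congruences (\ref{Cond2}). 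As a shortcut one may instead cite the general asymptotic decomposition theorem of Lamken and Wilson for edge-colored complete graphs, which yields $K_{r}$-decompositions of complete multipartite graphs under precisely the divisibility hypotheses above; this delivers the conclusion directly, while the WFC route keeps the threshold $m_{0}=m_{0}(r,k)$ explicit.
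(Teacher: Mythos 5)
The paper does not actually prove Theorem \ref{T-Cond2}: it is imported verbatim from Moh\'acsy \cite{mohacsy2011asymptotic}, so the only meaningful comparison is with that cited source. Your primary route --- recasting a GDD of type $m^{k}$ with block size $r$ and index $1$ as a $K_{r}$-decomposition of the complete multipartite graph $K_{k\times m}$, verifying that the congruences \eqref{Cond2} are exactly the local (degree: each point lies on $m(k-1)/(r-1)$ blocks) and global (edge-count: $\binom{k}{2}m^{2}/\binom{r}{2}$ blocks) divisibility conditions, and then running Wilson-style machinery (asymptotic existence of transversal designs from $N(q)\to\infty$, the fundamental construction with weights, closure and eventual periodicity to capture every large admissible $m$) --- is in outline the same strategy as Moh\'acsy's actual proof, and your derivation of the necessary conditions is correct. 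You also correctly read the theorem's slightly garbled wording: \eqref{Cond2} is a hypothesis on $m$, and the content is that these necessary conditions become sufficient for $m\ge m_{0}$.

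Two caveats keep this from being a complete argument. First, the proposed shortcut via Lamken--Wilson does not apply as stated: their theorem decomposes edge-colored \emph{complete} graphs into members of a fixed finite family of fixed edge-colored graphs as the number of vertices tends to infinity, whereas here the host $K_{k\times m}$ has a fixed number of parts $k$ and part size $m$ growing, so the groups are holes of unbounded size that cannot be encoded as a fixed color pattern or a fixed family member; this regime falling outside the classical asymptotic theorems is precisely why \cite{mohacsy2011asymptotic} exists. Second, the step you yourself identify as the technical heart --- showing that the set of realizable $m$ is eventually the full arithmetic progression cut out by \eqref{Cond2}, rather than a sparse subsequence --- is asserted rather than carried out; the truncated transversal designs, filling constructions, and congruence bookkeeping you name are exactly where the bulk of Moh\'acsy's paper lives, so your proposal should be read as a correct roadmap to the literature proof rather than a self-contained proof.
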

   	
   \subsection{Spectral radii of graphs and hypergraphs}
   Some of the necessary lemmas on the spectral radii of graphs and hypergraphs
   are presented below for the discussion in Section 3,
   with most of the results coming from Nikiforov.
   We write $\T_{k}(n)$ for the $k$-partite Tur\'an graph of order $n$,
   $ K_{k}(s_{1},\ldots ,s_{k})$ for the complete $k$-partite graph with parts of sizes $s_{1}\ge 2$, $s_{2},\ldots ,s_{k}$, respectively, and $K_{k}^{+}(s_{1},\ldots,s_{k})$ obtained from $K_{k}(s_{1},\ldots,s_{k})$ by adding an edge within the first part.
   By the result in \cite{FLZ},
   \begin{equation}\label{sp-Tur}\rho(\T_{k}(n))\le n\left(1-\frac{1}{k}\right).\end{equation}

   	\begin{lem}[\cite{nikiforov2009spectral}]\label{N-sp}
    Let $n,k$ be positive integers and $c$ be a positive real number such that $k\ge 3 $ and $(c/k^{k} )^{k}\ln{n}\ge 1$.
     If $G $ is a graph with $n$ vertices satisfying
     \[ \rho (G )\ge n\left(1-\frac{1}{k-1}+c\right), \]
   	then $G$ contains a $K_{k}(s,\dots s,t)$ with $s \ge \left \lfloor (c/k^{k}  )^{k}\ln{n} \right \rfloor$ and $t> n^{1-c^{k-1}}$.
   \end{lem}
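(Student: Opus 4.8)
The plan is to pass from the hypergraph $H$ to its shadow graph (2-section) $\partial H$ — the graph on $V(H)$ in which $u,v$ are adjacent whenever some edge of $H$ contains both of them — and to reduce the statement to Nikiforov's Lemma~\ref{N-sp} applied to $\partial H$. The bridge is the spectral comparison
\[
\rho(\partial H)\ \ge\ (r-1)\,\rho(H),
\]
which I would prove as follows. We may assume $H$ is connected (otherwise replace $H$ by a component of largest spectral radius; by Theorem~\ref{sp-gen} such a component still has at least $n(1-\frac{1}{k-1}+c)+1\ge n/2$ vertices, so all the asymptotics below survive). Let $x>0$ be the Perron vector of $H$, normalised so that $\sum_v x_v^r=1$, and set $y_v:=x_v^{r/2}$, so that $\sum_v y_v^2=1$. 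Since $H$ is linear, the cliques spanned by distinct edges are edge-disjoint in $\partial H$, whence $y^\top A(\partial H)y=2\sum_{e\in E(H)}\sum_{\{u,v\}\subseteq e}y_uy_v$. Applying AM--GM to the $\binom r2$ terms inside each edge and using that every vertex of $e$ lies in exactly $r-1$ of these pairs gives $\sum_{\{u,v\}\subseteq e}y_uy_v\ge \binom r2\prod_{v\in e}x_v$; summing over $e$ and recalling $\rho(H)=r\sum_e\prod_{v\in e}x_v$ yields $y^\top A(\partial H)y\ge (r-1)\rho(H)$, and the Rayleigh bound $\rho(\partial H)\ge y^\top A(\partial H)y$ finishes the claim.

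Feeding the hypothesis into this comparison gives $\rho(\partial H)\ge n(1-\frac{1}{k-1}+c)$, which is exactly the threshold of Lemma~\ref{N-sp}. For $n$ large the side condition $(c/k^k)^k\ln n\ge 1$ holds, so $\partial H$ contains a $K_k(s,\dots,s,t)$ with $s\ge\lfloor(c/k^k)^k\ln n\rfloor$ and $t>n^{1-c^{k-1}}$; in particular every part has size at least $l$ once $n$ is large, so $\partial H$ contains a (not necessarily induced) copy of $K_k(l,\dots,l)$ with vertex parts $V_1,\dots,V_k$ whose sizes tend to infinity with $n$.

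The remaining, genuinely hypergraph-theoretic step is to \emph{promote} this graph copy to the expansion $K_k(l,\dots,l)^r$ inside $H$. Each cross-pair $\{u,v\}$ with $u\in V_i$, $v\in V_j$, $i\ne j$, lies, by linearity, in a unique edge $e_{uv}\in E(H)$, whose remaining $r-2$ vertices are the natural candidate for a padding set. To realise the expansion I would select subsets $W_i\subseteq V_i$ with $|W_i|=l$ greedily so that (A) no edge of $H$ contains three of the chosen core vertices, and (B) the edges $e_{uv}$ attached to \emph{vertex-disjoint} cross-pairs never share a vertex. Condition (A) forces the edges $\{e_{uv}\}$ to be pairwise distinct and forces every padding set $e_{uv}\setminus\{u,v\}$ to avoid $\bigcup_i W_i$; moreover (A) together with linearity makes padding sets of cross-pairs sharing a core vertex automatically disjoint, so (B) is the only extra requirement. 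Both (A) and (B) exclude at most a constant number of candidate vertices (a bound depending only on $k,l,r$, since at most $kl$ core vertices and hence at most $\binom{kl}{2}$ edges of $H$ are ever involved) at each greedy step, so because $|V_i|\to\infty$ a valid choice exists for $n$ large. The main obstacle is precisely this promotion: one must rule out edges of $H$ that absorb several core vertices or that collide in a padding vertex, and the decisive point is that the target $K_k(l,\dots,l)^r$ has bounded size whereas Lemma~\ref{N-sp} supplies parts of unbounded size, turning each obstruction into a finite forbidden set inside an arbitrarily large pool.
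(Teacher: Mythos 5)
There is a genuine gap, and it is structural rather than technical: your proposal does not prove the statement at all. Lemma~\ref{N-sp} is a purely graph-theoretic assertion --- Nikiforov's spectral Erd\"os--Stone--Bollob\'as theorem --- about an arbitrary graph $G$ on $n$ vertices; no hypergraph appears in it. Your argument instead starts from a linear $r$-uniform hypergraph $H$, passes to the shadow graph $\partial H$, and then \emph{invokes Lemma~\ref{N-sp}} (``which is exactly the threshold of Lemma~\ref{N-sp}'') to find the $K_{k}(s,\dots,s,t)$ in $\partial H$, before promoting that graph copy to an expansion inside $H$. Read as a proof of Lemma~\ref{N-sp}, this is circular: the one genuinely spectral-extremal step --- extracting a complete $k$-partite subgraph from the eigenvalue hypothesis --- is precisely the content of the lemma, and you assume it rather than prove it. What you have actually written is, in substance, a proof of Lemma~\ref{N-1}, and it coincides with the paper's treatment of that lemma: your comparison $\rho(\partial H)\ge (r-1)\rho(H)$ is the paper's Lemma~\ref{Lconn} (your AM--GM computation with $y_v=x_v^{r/2}$ is the same chain of (in)equalities as (\ref{key}), run in the reverse direction), and your greedy promotion with finite forbidden sets is the construction in the proof of Theorem~\ref{sp-crit}. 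But Lemma~\ref{N-sp} itself is not proved anywhere in the paper; it is quoted verbatim from \cite{nikiforov2009spectral}, so there is no internal proof for your argument to match, and your argument cannot substitute for one.

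An actual proof of Lemma~\ref{N-sp} would have to work inside the graph $G$: Nikiforov's route first converts the hypothesis $\rho(G)\ge n\left(1-\frac{1}{k-1}+c\right)$ into a lower bound of order $n^{k}$ on the number of copies of $K_{k}$ in $G$ (via spectral bounds on clique counts), and then applies an Erd\"os-type supersaturation and double-counting argument to extract a $K_{k}(s,\dots,s,t)$ with $k-1$ classes of logarithmic size and one class of polynomial size; the specific parameters $\left\lfloor (c/k^{k})^{k}\ln n\right\rfloor$ and $n^{1-c^{k-1}}$ emerge from that counting. None of this machinery appears in your proposal. Your shadow-graph comparison and the greedy promotion are correct as far as they go --- and they would be a sound write-up if the target were Lemma~\ref{N-1} --- but they belong to the application of Lemma~\ref{N-sp}, not to the lemma itself.
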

   	
   \begin{lem}[\cite{nikiforov2007spectral}]\label{N-Tur}
    Let $n,k$ be positive integers and $c$ be a positive real number such that $k\ge 2$, $c=k^{-(2k+9)(k+1)}$ and $n\ge e^{2/c}$.
    If $G$ is a graph on $n$ vertices satisfying
   	$\rho(G)> \rho(\T_{k}(n))$,
   then $G$ contains a
   $K_{k}^{+}(\left \lfloor c\ln{n}\right\rfloor,\ldots,\left \lfloor c\ln{n} \right \rfloor)$.
   \end{lem}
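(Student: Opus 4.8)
The plan is to convert the threshold-exact hypothesis $\rho(G)>\rho(\T_{k}(n))$ into a quantitative spectral lower bound suitable for the spectral Erd\H{o}s--Stone--Bollob\'as machinery of Lemma~\ref{N-sp}, and then to upgrade the complete multipartite subgraph it produces into the desired $K_{k}^{+}$. First I would bound $\rho(\T_{k}(n))$ from below. Since $\T_{k}(n)$ has average degree at least $n(1-1/k)-1$, Lemma~\ref{deg} (applied with $r=2$) gives $\rho(\T_{k}(n))\ge n(1-1/k)-1$. Rewriting $1-1/k=\bigl(1-1/(k-1)\bigr)+\frac{1}{k(k-1)}$, this reads $\rho(\T_{k}(n))\ge n\bigl(1-\frac{1}{k-1}+c_{1}\bigr)$ with $c_{1}=\frac{1}{k(k-1)}-\frac1n\ge\frac{1}{2k(k-1)}$ for large $n$. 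Hence the hypothesis forces $\rho(G)\ge n\bigl(1-\frac1{k-1}+c_{1}\bigr)$, which is exactly the shape required to invoke Lemma~\ref{N-sp}; note the complementary upper bound $\rho(\T_{k}(n))\le n(1-1/k)$ of \eqref{sp-Tur} confirms $c_{1}$ is a genuine positive surplus over the $(k-1)$-partite threshold.

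Next I would apply Lemma~\ref{N-sp} with parameter $k$ and constant $c_{1}$. A routine comparison of constants shows $(c_{1}/k^{k})^{k}\ge c=k^{-(2k+9)(k+1)}$ and, since $n\ge e^{2/c}$ gives $\ln n\ge 2/c$, also $(c_{1}/k^{k})^{k}\ln n\ge 2\ge 1$; so the lemma produces a complete $k$-partite subgraph $K_{k}(s,\dots,s,t)$ of $G$ with $s\ge\lfloor (c_{1}/k^{k})^{k}\ln n\rfloor\ge\lfloor c\ln n\rfloor$ and one much larger part $T$ of size $t>n^{1-c_{1}^{k-1}}$. At this stage $k-1$ of the parts already carry at least $q:=\lfloor c\ln n\rfloor$ vertices and $T$ carries far more, so to realise $K_{k}^{+}(q,\dots,q)$ it suffices to exhibit a single edge of $G$ inside one part: if a part $P$ spans an edge $uv$, I would take $q$ vertices of $P$ containing $u,v$ together with $q$ vertices from each remaining part; every cross edge is present because the located subgraph is complete $k$-partite, and $uv$ furnishes the extra edge.

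The main obstacle is precisely forcing this within-part edge. Because the hypothesis places $\rho(G)$ only infinitesimally above the Tur\'an threshold, the spectral surplus is too small to locate such an edge by crude counting: an independent part of size $t$ is no obstruction when $t=n^{1-o(1)}$ is merely near-linear. Indeed, if $T$ were independent then $\rho(G)^{2}\le 2e(G)\le n^{2}-t^{2}$, which drops below $\rho(\T_{k}(n))^{2}\approx n^{2}(1-1/k)^{2}$ only when $t=\Omega(n)$, a linear lower bound that Lemma~\ref{N-sp} does not supply. I therefore expect the technical heart to be a spectral-stability argument run by contradiction: assuming $G$ is $K_{k}^{+}(q,\dots,q)$-free (so that every balanced copy of $K_{k}(q,\dots,q)$ is \emph{induced}), one analyses the Perron vector of $G$ to show that the vertices carrying almost all of the eigenweight partition into $k$ nearly independent classes, forcing $G$ to be structurally close to a complete $k$-partite graph and hence $\rho(G)\le\rho(\T_{k}(n))$ by the spectral Tur\'an theorem \cite{nikiforov2007bounds}, against the hypothesis. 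Turning the \emph{local} induced multipartite structure furnished by Lemma~\ref{N-sp} into this \emph{global} conclusion, while keeping the eigenweight accounting tight enough to recover the exact extremal value $\rho(\T_{k}(n))$, is where the difficulty concentrates.

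Finally, the case $k=2$ (excluded from Lemma~\ref{N-sp}) would be treated separately: here $\rho(G)>\rho(\T_{2}(n))=\sqrt{\lceil n/2\rceil\lfloor n/2\rfloor}$ forces a triangle via the spectral form of Mantel's theorem, and the same stability-plus-supersaturation scheme upgrades it to a blown-up triangle $K_{2}^{+}(q,q)$. I would also close the argument by checking that the stated numerical value $c=k^{-(2k+9)(k+1)}$ and the range $n\ge e^{2/c}$ remain consistent with every constant comparison invoked above.
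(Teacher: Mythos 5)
There is a genuine gap, and it sits exactly where you yourself locate ``the technical heart.'' Your opening moves are fine: the bound $\rho(\T_{k}(n))\ge n(1-1/k)-1$ via Lemma \ref{deg}, the rewriting $1-1/k=(1-\frac{1}{k-1})+\frac{1}{k(k-1)}$ giving a genuine surplus $c_{1}$ over the $(k-1)$-partite threshold, the constant check $(c_{1}/k^{k})^{k}\ge c$, and the reduction of the problem to exhibiting a single edge inside one part of the $K_{k}(s,\dots,s,t)$ produced by Lemma \ref{N-sp} are all correct, as is your observation that crude counting cannot force such an edge when $t=n^{1-o(1)}$. But everything after that point is a plan, not a proof. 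The within-part edge at \emph{zero} spectral surplus above $\rho(\T_{k}(n))$ is the entire content of the lemma --- it is what distinguishes the saturation statement from the Erd\H{o}s--Stone--Bollob\'as statement you already have --- and you only conjecture that ``a spectral-stability argument run by contradiction'' will supply it. Worse, the endgame of that sketch does not work as described: from ``$G$ is structurally close to complete $k$-partite'' you cannot conclude $\rho(G)\le\rho(\T_{k}(n))$ by the spectral Tur\'an theorem of \cite{nikiforov2007bounds}, since that theorem applies to $K_{k+1}$-free graphs, not to graphs near a $k$-partite structure; indeed $\T_{k}(n)$ plus one edge is as close to $k$-partite as a non-extremal graph can be, yet has spectral radius strictly above $\rho(\T_{k}(n))$. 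So the contradiction must be extracted from the $K_{k}^{+}(q,\dots,q)$-freeness itself by a quantitative argument at the exact threshold, and none is given. The same deferral occurs in your $k=2$ case (``the same stability-plus-supersaturation scheme upgrades it'').

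For calibration: the paper does not prove this lemma at all --- it is imported verbatim from Nikiforov \cite{nikiforov2007spectral} --- so there is no internal proof to compare against. In the cited source the threshold-exact step is handled by a substantive separate mechanism (a spectral strengthening of the Bollob\'as--Nikiforov ``joints'' theorem: $\rho(G)>\rho(\T_{k}(n))$ forces polynomially many cliques $K_{k+1}$ sharing a common edge, from which the $K_{k}^{+}(\lfloor c\ln n\rfloor,\dots,\lfloor c\ln n\rfloor)$ is then assembled by a supersaturation argument), which is precisely the ingredient your proposal would need to invent or cite to be complete. As it stands, your write-up proves the easy reduction and correctly diagnoses why it is insufficient, but does not prove the statement.
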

   	
  The key method of this paper is to build a connection between the spectral radius of a hypergraph and that of its shadow graph.
  Let $H$ be a linear $r$-uniform hypergraph.
  The \emph{shadow graph} of $H$, denoted by $\partial H$, is
   	the graph with vertex set $V(\partial H)=V(H)$ and edge set $E(\partial (H)  )=\{\{x,y\}: \{x,y\}\subseteq e \in E(H)\}$.
   Alternatively, $\partial H$ is obtained from $H$ by replacing each edge $e$ with a clique on the vertices of $e$.

   	\begin{lem}\label{Lconn} Let $H$ be a connected linear $r$-uniform graph.
    Then
    \begin{equation}\label{conn} \rho (H )\le \frac{1}{r-1}\rho (\partial H), \end{equation}
     with equality if and only if $H$ is regular.
   \end{lem}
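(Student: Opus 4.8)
The plan is to combine the variational characterization of the tensor spectral radius in (\ref{max})--(\ref{r-form}) with a carefully chosen test vector for the ordinary graph $\partial H$. Since $H$ is connected, by Theorem \ref{PF} the adjacency tensor $\mathcal{A}(H)$ is weakly irreducible and has a positive Perron vector $x$, which I normalize so that $\|x\|_r=1$; then (\ref{max}) and (\ref{r-form}) give $\rho(H)=r\sum_{e\in E(H)}\prod_{v\in e}x_v$.

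First I would define a test vector $y$ for $A(\partial H)$ by $y_v=x_v^{r/2}$. Since $\|y\|_2^2=\sum_v x_v^r=1$, the Rayleigh bound yields $\rho(\partial H)\ge y^\top A(\partial H)y=2\sum_{\{u,v\}\in E(\partial H)}y_u y_v$. Here the linearity of $H$ is essential: every pair of vertices lies in at most one hyperedge, so the edges of $\partial H$ are precisely the pairs $\{u,v\}\subseteq e$ for a \emph{unique} hyperedge $e$, with no multiplicity. Hence I may reorganize the sum hyperedge-by-hyperedge as $\sum_{\{u,v\}\in E(\partial H)}y_u y_v=\sum_{e\in E(H)}\sum_{\{u,v\}\subseteq e}x_u^{r/2}x_v^{r/2}$.

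The heart of the argument is an AM--GM estimate inside each hyperedge. For a fixed $e=\{v_1,\ldots,v_r\}$, the geometric mean of the $\binom{r}{2}$ terms $x_u^{r/2}x_v^{r/2}$ equals $\prod_{v\in e}x_v$, because each vertex of $e$ occurs in exactly $r-1$ pairs and the exponents collapse to $1$; thus $\sum_{\{u,v\}\subseteq e}x_u^{r/2}x_v^{r/2}\ge\binom{r}{2}\prod_{v\in e}x_v$. Summing over all hyperedges and substituting back gives $\rho(\partial H)\ge r(r-1)\sum_e\prod_{v\in e}x_v=(r-1)\rho(H)$, which is exactly (\ref{conn}).

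The main obstacle, and the step needing the most care, is the equality characterization. Equality throughout forces two conditions simultaneously: equality in AM--GM for every hyperedge, and that $y=x^{r/2}$ be a Perron vector of $\partial H$. For $r\ge 3$ the AM--GM equalities assert that all pairwise products $x_u x_v$ agree within each edge, which (as $x>0$) makes $x$ constant on each edge and, by connectivity, constant on all of $V(H)$; then $y$ is constant, and a constant vector attains $\rho(\partial H)$ precisely when $\partial H$ is regular. I would then record that, by linearity, $\deg_{\partial H}(v)=(r-1)\deg_H(v)$, so $\partial H$ is regular iff $H$ is regular, which completes the ``only if'' direction. For the converse, when $H$ is $d$-regular Lemma \ref{deg} gives $\rho(H)=d$, while $\partial H$ is $(r-1)d$-regular with $\rho(\partial H)=(r-1)d$, so equality holds.
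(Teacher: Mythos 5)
Your proposal is correct and follows essentially the same route as the paper: the test vector $y=x^{[r/2]}$ for $\partial H$, the AM--GM inequality within each hyperedge (you apply it as a lower bound on the arithmetic mean, the paper as an upper bound on the geometric mean -- the same estimate), and linearity to identify the edge-by-edge sum with the quadratic form of $\partial H$. Your equality analysis via constancy of the Perron vector and $\deg_{\partial H}(v)=(r-1)\deg_H(v)$ is also sound and matches the paper's in substance.
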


 \begin{proof}
Let ${x}$ be a Perron vector of $H$ with $\| x \|_{r}=1$.
For an edge $e \in E(H)$, denote $x^e=\prod_{v \in e}x_v$.
We have
   	\begin{equation}\label{key}
   \begin{split}
   		\rho (H )&=x^\top \A(H)x^{r-1}=r\sum_{e\in E(H)}x^{e}\\
   &=r\sum_{e\in E(H)} \prod_{\{ i,j \}\in \binom{e}{2}}  (x_{i}x_{j})^{\frac{1}{r-1}}\\
   &\le r\sum_{e\in E(H)} \frac{1}{\binom{r}{2} }\sum_{\{ i,j \}\in \binom{e}{2}  }  (x_{i}x_{j})^{\frac{\binom{r}{2}}{r-1}}\\
    &=\frac{2}{r-1} \sum_{e\in E(H)}  \sum_{\{ i,j \}\in \binom{e}{2}}  (x_{i})^{\frac{r}{2} } (x_{j} )^{\frac{r}{2}}=\frac{2}{r-1} \sum_{\{ i,j\}\in E(\partial H)}  (x_{i})^{\frac{r}{2}}
   (x_{j})^{\frac{r}{2}}\\
   &= \frac{1}{r-1} (x^{[r/2]})^\top \A(\partial H) x^{[r/2]} \\
   &\le \frac{1}{r-1} \rho(\partial H),
   \end{split}
   	\end{equation}
      where the first inequality follows from the inequality for arithmetic means and geometric means, and the second inequality follows from Eq. (\ref{max}) as $\|x^{[r/2]}\|_2=1$.

      If $H$ is regular, then $\partial H$ is also regular as $H$ is linear, and the Perron vector $x=|V(H)|^{-1/r}\mathbf{1}$, where $\mathbf{1}$ is an all one vector.
      It is easy to see all inequalities in (\ref{key}) become equalities in this case, and hence we arrive the equality in Eq. (\ref{conn}).
      On the other hand, if $\rho (H )= \frac{1}{r-1}\rho (\partial H)$,
      then all entries of $x$ are equal from the first inequality in (\ref{key}), implying that $x=|V(H)|^{-1/r}\mathbf{1}$.
      By the eigenvector equation (\ref{ev}), we get that $H$ is regular.
     \end{proof}

   	\section{Proof}
   	
 	\begin{proof}[Proof of Theorem \ref{sp-crit}]
  It may be assumed that $F-e$ is a $k$-partite graph with all part sizes less than $l$, which implies $K_{k}^{+}(l,\ldots,l)$ contains a copy of $F$.
   Let $H$ be an $F^{r}$-free linear hypergraph.
   Suppose that $\rho(H)> \frac{n(k-1) }{k(r-1)}$.
   By Lemma \ref{Lconn} and Eq. (\ref{sp-Tur}), we have
    \[ \rho(\partial H)\ge (r-1 )\rho(H)> \frac{n(k-1)}{k}\ge \rho(\T_{k}(n)  ). \]
    So, by Lemma \ref{N-Tur}, for sufficiently large $n$, $\partial H$ contains a $K_{k}^{+}(\left \lfloor c\ln{n}\right\rfloor,\ldots,\left \lfloor c\ln{n} \right \rfloor)$, where $c$ satisfies the condition in Lemma \ref{N-Tur}.
    Below we will construct a $(K_{k}^{+}(l,\ldots,l))^{r} $, the $r$-expansion of $K_{k}^{+}(l,\dots ,l)$, by a copy $K_{k}^{+}(\left \lfloor c\ln{n}  \right  \rfloor,\ldots ,\left \lfloor c\ln{n} \right \rfloor)$ which is contained in $\partial H$.
    Thus $H$ contains a $F^{r}$; a contradiction.
   	
   	Construction method. Take a copy of $K_{k}^{+}(\lfloor c\ln{n}\rfloor,\ldots, \lfloor c\ln{n} \rfloor)$ in $\partial H$.
   Let $V_{i}=\{v_{i1},v_{i2},\ldots,v_{i,\lfloor c\ln{n} \rfloor}\}$, $i\in [k]$, be all $k$ parts of this copy, and the critical edge $e:=\{v_{11},v_{12}\}$ is contained in $V_{1}$.
    Since $H$ is linear, if $\{x,y\}$ is an edge of $\partial H$, then
   	$\{x,y\}$ must be contained in a unique edge of $H$ denoted by $E_{xy}$.
 Denote $\E_{xy}=E_{xy}\backslash \{x,y\}$ if $\{x,y\} \in E(\partial H)$, and
 $\E_{xy}=\emptyset$ otherwise.

   	First let $A_{1}=\{v_{11},v_{12}\}$ and $B_{1}=\E_{v_{11}v_{12}}$. For $j=2,\ldots,l-1$, take a vertex $v \in V_1\setminus (A_{j-1} \cup B_{j-1})$, let $A_j=A_{j-1} \cup \{v\}$, $B_j =B_{j-1} \cup \{\E_{vw} : w \in A_{j-1}\} $.
   So we have taken an $l$-subset $A_{l-1}$ from $V_1$.

   We are going to take an $l$-subset from each $V_i$ for $i=2,\ldots,k$ in the following way.
      For each $i=2,\ldots, k$, and each $j=(i-1)l,\ldots,il-1$,
   let $C_{j-1}=\cup\{\E_{xy}: x \in A_{j-1},y \in B_{j-1}\}$,
   take a vertex $v \in V_i \backslash (A_{j-1} \cup B_{j-1}\cup C_{j-1})$, and
   let $A_j=A_{j-1} \cup \{v\}$ and $B_j=B_{j-1} \cup \{\E_{vw} : w \in A_{j-1}\}$.
  So we get sets $A_i,B_i$ for $i=1,\ldots,kl-1$, where $A_{il-1}\backslash A_{(i-1)l-1}$ is an $l$-subset taken from $V_i$ for  $i=2,\ldots,k$.

   Note that $|A_{j-1}|=j$, $|B_{j-1}|\le (r-2)\binom{j}{2}$, and $|C_{j-1}| \le (r-2)|A_{j-1}||B_{j-1}|=j(r-2)^2\binom{j}{2}$ for $j=l,\ldots,kl-1$.
   So, for $j=2,\ldots,l-1$, if
   $$ |V_1|=\lfloor c \ln n\rfloor > j+(r-2)\binom{j}{2}\ge |A_{j-1}|+|B_{j-1}|,$$
   and for $i=2,\ldots,l$ and $j=(i-1)l,\ldots,il-1$, if
   $$|V_i|=\lfloor c \ln n\rfloor > j+(r-2)\binom{j}{2}+j(r-2)^2\binom{j}{2} \ge |A_{j-1}|+|B_{j-1}|+|C_{j-1}|,$$
   the above procession can be continued.
   This is guaranteed by taking $n$ sufficiently large.

   Let $G$ be a subgraph of $K_{k}^{+}(\lfloor c\ln{n}\rfloor,\ldots, \lfloor c\ln{n} \rfloor)$ induced by the vertices of $A_{kl-1}$ and $G^r$ be the sub-hypergraph of $H$ induced by edges $E_{xy}$ for $\{x,y\} \in E(G)$.
    Then $G=K_{k}^{+}(l,\ldots,l)$ and $G^r=(K_{k}^{+}(l,\ldots,l))^r$.
The construction is completed.

   	 On the other hand, if $n=mk$, and $m,k,r$ satisfies the condition (\ref{Cond2}) in Theorem \ref{T-Cond2}, then a $2$-GDD of group type $m^{k}$ with block size $r$ and index $1$ exists.
   Let $\hat{H}$ be the hypergraph associated with the above $2$-GDD.
   Then $\hat{H}$ is $F^r$-free and $\frac{n(k-1)}{k(r-1)}$-regular.
   So $\rho(\hat{H})=\frac{n(k-1)}{k(r-1)}$ by Lemma \ref{deg}.
      \end{proof}
   	
   	\begin{proof}[Proof of Theorem \ref{sp-gen}]
   Observe that $\partial H$ is a connected graph on $n$ vertices, and
       $\rho(\partial H)\le n-1$ with equality if and only if $\partial H$ is a complete graph.
   By Lemma \ref{Lconn}, we have
   \[\rho(H)\le \frac{1}{r-1}\rho (\partial H)\le \frac{n-1}{r-1}. \]
   	The above equality holds if and only if $H$ is regular and $\partial H$ is a complete graph, which is equivalent to that $H$ is a $2$-$(n,r,1)$ design.
      \end{proof}

   	\begin{proof}[Proof of Corollary \ref{C-cri}]
     By the inequality (\ref{sp-ed}) and Theorem \ref{spcri},
     \begin{equation}\label{Coro1} \ex_r^{\lin}(n,F^r) \le \frac{n}{r}\spex_r^{\lin}(n,F^r) \le \frac{n^2(k-1)}{kr(r-1)}.\end{equation}
     If  $n=km$ and $m,k,r$ satisfy (\ref{Cond2}) in Theorem \ref{T-Cond2},
     then a $2$-GDD of group type $m^{k}$ with block size $r$ and index $1$ exists.
     The associated hypergraph $\hat{H}$ makes the second inequality of (\ref{Coro1}) holds as equality $\frac{n^2(k-1)}{kr(r-1)}$ edges.
     So the inequality in Corollary  \ref{C-cri}  holds as equality in this case.
     \end{proof}

     \begin{proof}[Proof of Corollary \ref{C-edge}]
     By the inequality (\ref{sp-ed}) and Theorem \ref{sp-gen},
     $$ e(H) \le \frac{n}{r}\rho(H) \le \frac{n(n-1)}{r(r-1)}.$$
     The second inequality holds as equality if and only if $H$ is a $2$-$(n,r,1)$ design.
     Observe that in this case $H$ has exactly $\frac{n(n-1)}{r(r(r-1)}$ edges.
     So we prove the Corollary \ref{C-edge}.
     \end{proof}
        	
   	\begin{proof}[Proof of Lemma \ref{N-1}]
   Since $\rho (H)\ge \frac{1}{r-1}(1-\frac{1}{k-1}+c)n $,
     by Lemma \ref{Lconn} we have
     \[ \rho(\partial H) \ge (r-1) \rho(H)\ge n(1-\frac{1}{k-1}+c). \]
     By Lemma \ref{N-sp}, there exists an $n_0=n_0(c,k)$ such that if $n \ge n_0$, then $\partial H$ contains a $K_{k}(s,\ldots s,t)$ with $s \ge  \lfloor (c/k^{k})^{k}\ln n \rfloor$ and $t> n^{1-c^{k-1}}$.
     By a similar discussion as in Theorem \ref{sp-crit}, there exists an $n'_0=n'_0(c,k,l,r)$ such that if $n \ge n'_0$, we can construct a $ K_{k}(l,\dots ,l)^{r}$ in $H$.
       \end{proof}

   	\begin{proof}[Proof of Theorem \ref{N-2}]
   By Lemma \ref{N-1}, we have
   \[ \limsup_{n \to \infty} \frac{\spex_{r}^{\lin}(n,F^{r})}{n} \le \frac{1}{r-1}\left(1-\frac{1}{k-1}\right).\]

   	   	On the other hand, by Theorem \ref{T-Cond2}, there exists a $m_0=m_{0}(r,k)$ such that if $m\ge m_{0}$ there exists a $2$-GDD of group type $m^{k-1}$ with block size $r$ and index $1$ satisfying
      \[ m(k-2) \equiv 0 \mod{(r-1)}, ~~~m^{2}(k-1)(k-2)  \equiv 0 \mod{r(r-1)}. \]
      So, the hypergraph $H$ associated with the above $2$-GDD on $n=m(k-1)$ vertices is $\frac{n}{r-1} (1-\frac{1}{k-1})$-regular, and then
   $\rho(H)= \frac{n}{r-1}(1-\frac{1}{k-1})$.
   Obviously, $H$ is $F^{r}$-free, which implies that
   \[ \liminf_{n \to \infty} \frac{\spex_{r}^{\lin}(n,F^{r})}{n} \ge  \frac{1}{r-1}(1-\frac{1}{k-1}).\]
   The result follows.
   \end{proof}


\begin{thebibliography}{99}




\bibitem{brualdi1986spectral}
R.~A. Brualdi, E.~S. Solheid,
On the spectral radius of complementary acyclic matrices of zeros and
  ones,
{\em SIAM J. Algebraic Discrete Methods}, 7(2)(1986), 265-272.


\bibitem{chang2008perron}
K.-C. Chang, K.~Pearson, T.~Zhang,
Perron-frobenius theorem for nonnegative tensors,
{\em Commun. Math. Sci.}, 6(2)(2008), 507-520.

\bibitem{cooper2012spectra}
J.~Cooper, A.~Dutle,
Spectra of uniform hypergraphs,
{\em Linear Algebra Appl.}, 436(9)(2012), 3268-3292.

\bibitem{erdos1964extremal}
P.~Erd\"{o}s,
On extremal problems of graphs and generalized graphs,
{\em Israel J. Math.}, 2(3)(1964), 183-190.



\bibitem{FHB} Y.-Z. Fan, T. Huang, Y.-H. Bao, C. L. Zhuan-Sun, Y.-P. Li, The spectral symmetry of weakly irreducible nonnegative tensors and connected hypergraphs, \emph{Trans. Amer. Math. Soc.}, 372(3) (2019), 2213-2233.

\bibitem{FHBproc} Y.-Z. Fan, T. Huang, Y.-H. Bao, The dimension of eigenvariety of nonnegative tensors associated with spectral radius, \emph{Proc. Amer. Math. Soc.}, 150 (2022), 2287-2299.





\bibitem{FLZ} L. Feng, Q. Li, X.-D. Zhang, Spectral radii of graphs with given chromatic number, \emph{Appl. Math. Lett.}, 20 (2007), 158-162.


\bibitem{friedland2013perron}
S.~Friedland, S.~Gaubert, L.~Han,
 Perron-frobenius theorem for nonnegative multilinear forms and
  extensions,
{\em Linear Algebra Appl.}, 438(2)(2013), 738-749.

\bibitem{furedi2020extension}
Z.~F{\"u}redi, A.~Gy{\'a}rf{\'a}s,
An extension of mantel’s theorem to $k$-graphs,
{\em Amer. Math. Monthly}, 127(3)(2020), 263-268.


\bibitem{furedi2013history}
Z.~F{\"u}redi, M.~Simonovits,
The history of degenerate (bipartite) extremal graph problems,
In {\em Erd{\"{o}}s Centennial}, pp. 169-264, Bolyai Soc. Math. Stud., 25, J\'anos Bolyai Math. Soc., Budapest, 2013.

\bibitem{gao2021linear}
G.~Gao, A.~Chang,
A linear hypergraph extension of the bipartite Tur\'an problem,
{\em European J. Combin.}, 93(2021), 103269.

\bibitem{gao2022spectral}
G.~Gao, A.~Chang, Y.~Hou,
Spectral radius on linear $r$-graphs without expanded $K_{r+1}$,
{\em SIAM J. Discrete Math.}, 36(2)(2022), 1000-1011.

\bibitem{hou2021spectral}
Y.~Hou, A.~Chang, J.~Cooper,
Spectral extremal results for hypergraphs,
{\em Electron. J. Combin.}, 28(3)(2021), P3.46, 2021.

\bibitem{keevash2014spectral}
P.~Keevash, J.~Lenz, D.~Mubayi,
Spectral extremal problems for hypergraphs,
{\em SIAM J Discrete Math.}, 28(4)(2014, 1838-1854.



\bibitem{lim2005singular}
L.-H. Lim, Singular values and eigenvalues of tensors: a variational approach, in \emph{Computational Advances in Multi-Sensor Adapative Processing}, 2005 1st IEEE International Workshop, IEEE, Piscataway, NJ, 2005, pp. 129-132.






\bibitem{mantel1907problem}
W.~Mantel,
Problem 28,
{\em Wiskundige Opgaven}, 10(1907), 60-61.

\bibitem{mohacsy2011asymptotic}
H.~Moh{\'a}csy,
The asymptotic existence of group divisible designs of large order
  with index one,
{\em J. Combin. Theory Ser A}, 118(7)(2011), 1915-1924.

\bibitem{Mubayi2006A}
D. Mubayi,
 A hypergraph extension of Tur\'an's theorem,
{\em J. Combin. Theory Ser. B}, 96(1)(2006), 122-134.

\bibitem{mubayi2007new}
D.~Mubayi, O.~Pikhurko.
A new generalization of Mantel's theorem to $k$-graphs,
{\em J. Combin. Theory Ser. B}, 97(4)(2007), 669-678.

\bibitem{mubayi2016survey}
D.~Mubayi, J.~Verstra{\"e}te,
A survey of Tur{\'a}n problems for expansions,
In {\em Recent Tends in Combinatorics}, pp. 117-143, Springer, 2016.


\bibitem{nikiforov2007bounds}
V.~Nikiforov,
Bounds on graph eigenvalues II,
{\em Linear Algebra Appl.}, 427(2-3)(2007), 183-189.

\bibitem{nikiforov2007spectral}
V.~Nikiforov,
  Spectral saturation: inverting the spectral Tur{\'a}n theorem,
  {\em Electron. J. Combin.}, 16 (2009), R33.



\bibitem{nikiforov2009spectral}
V.~Nikiforov,
  A spectral Erd{\"{o}}s-Stone-Bollob{\'a}s theorem,
  {\em Combin. Probab. Comput.}, 18(3)(2009), 455-458.




\bibitem{qi2005eigenvalues}
L.~Qi,
  Eigenvalues of a real supersymmetric tensor,
  {\em J. Symbolic Comput.}, 40(6)(2005), 1302-1324.


\bibitem{simonovits1968method}
M.~Simonovits,
  A method for solving extremal problems in graph theory, stability
  problems,
  In {\em Theory of Graphs (Proc. Colloq., Tihany, 1966)}, pp.
  279-319, Academic Press, New York, 1968.


\bibitem{turaan1941extremal}
P. Tur\'an, On an extremal problem in graph theory, \emph{Math. Fiz. Lapok}, 48 (1941), 436-452.

\bibitem{wilson1975existence}
R.~M. Wilson,
  An existence theory for pairwise balanced designs, III: Proof of the
  existence conjectures,
  {\em J. Combin. Theory Ser A}, 18(1)(1975), 71-79.


%

\bibitem{YY2011-2}
Y. Yang, Q. Yang, On some properties of nonnegative weakly irreducible tensors, \emph{arXiv}: 1111.0713v2



\end{thebibliography}
\end{document}